\newtheorem{theorem}{Theorem}[section]
\newtheorem{lemma}[theorem]{Lemma}
\newtheorem{prop}[theorem]{Proposition}
\newtheorem{cor}[theorem]{Corollary}
\theoremstyle{definition}
\theoremstyle{remark}
\newtheorem{remark}[theorem]{Remark}
\newtheorem{definition}[theorem]{Definition}
\newcommand{\R}{\mathbb{R}}
\newcommand{\cM}{\mathcal{M}}
\newcommand{\de}{\delta}
\newcommand{\om}{\omega}
\newcommand{\Om}{\Omega}
\newcommand{\kap}{\kappa}
\newcommand{\la}{\lambda}
\renewcommand{\phi}{\varphi}
\newcommand{\diam}{\operatorname{diam}}
\newcommand{\CAT}{\operatorname{CAT}}
\newcommand{\PT}{\operatorname{PT}}
\newcommand{\hyp}{\operatorname{H}}
\newcommand{\id}{\operatorname{id}}
\newcommand{\crt}{\operatorname{crt}}
\newcommand{\Con}{\operatorname{Con}}
\newcommand{\skap}{\operatorname{sn}_{\kappa}}
\renewcommand{\d}{\partial}
\newcommand{\sm}{\setminus}
\newcommand{\sub}{\subset}
\newcommand{\ov}{\overline}
\begin{document}

\title{Hyperbolic Spaces and Ptolemy M\"obius Structures}
\author{Renlong Miao \&  Viktor Schroeder}

\maketitle

\begin{abstract}
 We characterize the class of Gromov hyperbolic spaces, whose boundary at infinity
allow canonical M\"obius structures.
\end{abstract}

\section{Introduction}

There is a deep and well studied relation between the geometry of the classical 
hyperbolic space and the M\"obius geometry of
its boundary at infinity.
This relation can be generalized in a nice way to
$\CAT (-1)$ spaces. 

Let 
$X$ be a 
$\CAT (-1)$ space with boundary
$Z = \d_{\infty}X$. For every basepoint
$o\in X$ one can define the Bourdon metric
$\rho_o(x,y)=e^{-(x|y)_o}$ on $Z$, where
$(\ |\ )_o$ is the Gromov product with respect to
$o$, compare \cite{B1}.
For different basepoints 
$o,o'\in X$ the metrics $\rho_o,\rho_{o'}$ are
M\"obius equivalent and thus define a 
M\"obius structure on $Z$.
By \cite{FS1} this M\"obius structure is ptolemaic.

On the other hand, examples show that not every ptolemaic M\"obius structure arises 
as boundary of a 
$\CAT (-1)$ space. In this paper we 
enlarge the class of
$\CAT(-1)$ spaces in a way that this larger class corresponds
exactly to the spaces which have a ptolemaic M\"obius structure
at infinity.

\begin{definition}
 A metric space is called
{\em asymptotically $\PT_{-1}$}, if there exists
some $\de >0$ such that for all quadruples
$x_1,x_2,x_3,x_4 \in X$ we have
$$ e^{\frac{1}{2}(\rho_{1,3} +\rho_{2,4})} \ \le\  
e^{\frac{1}{2}(\rho_{1,2} +\rho_{3,4})} \ +\ 
e^{\frac{1}{2}(\rho_{1,4} +\rho_{2,3})} \ +\ \de\, e^{\frac{1}{2}\rho}, $$
where $\rho_{i,j} =d(x_i,x_j)$ and $\rho = \max_{i,j}\rho_{i,j}$.
\end{definition}

We discuss this curvature condition in more detail later and compare it
in section \ref{subsec:ascat} 
with the asymptotically $\CAT(-1)$ condition, which is formulated in more 
familiar comparison terms. 
It turns out that 
$\CAT(-1)$ are 
asymptotically $\PT_{-1}$ 
and that the relation between these spaces and the
M\"obius geometry of their boundaries can be
expressed in the following two results:

\begin{theorem} \label{thm:1}
Let $X$ be asymptotically $\PT_{-1}$, then $X$ is a boundary continuous
Gromov hyperbolic space. For every basepoint $o\in X$,
$\rho_o(x,y) =e^{-(x|y)_o}$ defines a metric on $\d_{\infty}X$.
For different basepoints these metrics
are M\"obius equivalent and thus define a canonical M\"obius structure 
$\cM$ on $\d_{\infty}X$.
The M\"obius structure $\cM$ is complete and ptolemaic.
\end{theorem}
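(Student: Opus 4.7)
The plan hinges on one reformulation of the defining inequality: multiplying by $e^{-S/2}$ with $S=\sum_i d(o,x_i)$ and using $e^{-(x|y)_o}=\rho_o(x,y)$, it becomes
\begin{equation}\tag{$\ast$}
\rho_o(x_1,x_3)\rho_o(x_2,x_4)\le \rho_o(x_1,x_2)\rho_o(x_3,x_4)+\rho_o(x_1,x_4)\rho_o(x_2,x_3)+\de\,e^{(\rho-S)/2},
\end{equation}
where the error $e^{(\rho-S)/2}$ vanishes as all four basepoint distances $d(o,x_i)$ tend to infinity, since $\rho\le d(o,x_i)+d(o,x_j)$ for the pair realising $\rho$ makes the exponent at most $-\tfrac12(d(o,x_k)+d(o,x_l))$ for the complementary pair. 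The whole theorem will be extracted from this single inequality.

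For Gromov hyperbolicity, let $A\le B\le C$ be the three matching sums $\rho_{ij}+\rho_{kl}$ of a quadruple. Applying the definition with the diagonal pairing realising $C$ gives $e^{C/2}\le e^{A/2}+e^{B/2}+\de\,e^{\rho/2}$. A short case analysis shows $B\ge\rho$ always: if the maximal distance $\rho$ lies in $A$ or $B$ this is immediate, while if $\rho\in C$, the two triangle inequalities for $\rho$ through the other two indices give $A+B\ge 2\rho$, hence $B\ge\rho$ since $B\ge A$. Therefore $e^{C/2}\le(2+\de)e^{B/2}$, i.e.\ $C-B\le 2\log(2+\de)$, which is Gromov's four-point condition.

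Boundary continuity is the main technical step. Applied to $(x_n,y_n,y'_n,o)$ with $x_n\to\xi$ and $y_n,y'_n\to\eta$, and using $\rho_o(\cdot,o)\equiv 1$, two choices of diagonal in $(\ast)$ yield
\[
|\rho_o(x_n,y_n)-\rho_o(x_n,y'_n)|\le \rho_o(y_n,y'_n)+\de\,e^{(\rho-S)/2}\to 0.
\]
The symmetric estimate in the first slot shows that $\rho_o(x_n,y_n)$ depends on the approximating sequences only up to a vanishing error, and an interleaving/contradiction argument then forces the existence of $\rho_o(\xi,\eta):=\lim\rho_o(x_n,y_n)$. Non-degeneracy follows from $(x_n|y_n)_o\to\infty$ iff $x_n,y_n$ represent the same boundary point; the triangle inequality on $\di X$ is the limit of $(\ast)$ on $(x_n,y_n,z_n,o)$, which after using $\rho_o(\cdot,o)\equiv 1$ reads $\rho_o(x_n,z_n)\le \rho_o(x_n,y_n)+\rho_o(y_n,z_n)+o(1)$; and the ptolemaic inequality is the direct limit of $(\ast)$ along four sequences to distinct boundary points.

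For M\"obius equivalence, the logarithm of the cross ratio $\rho_o(x_1,x_2)\rho_o(x_3,x_4)/(\rho_o(x_1,x_3)\rho_o(x_2,x_4))$ equals $\tfrac12[d(x_1,x_2)+d(x_3,x_4)-d(x_1,x_3)-d(x_2,x_4)]$, independent of $o$, and this persists in the limit. For completeness, given a Cauchy sequence $(\xi_n)$ in $\rho_o$, I pick $y_n\in X$ with $d(o,y_n)>n$ and $(y_n|\xi_n)_o>n$; two iterations of the Gromov four-point condition give $(y_n|y_m)_o\to\infty$, so $(y_n)$ represents some $\xi\in\di X$, and one more iteration yields $\rho_o(\xi_n,\xi)\to 0$. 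The main obstacle throughout is the boundary continuity step: Gromov hyperbolicity alone only bounds the oscillation of $(x_n|y_n)_o$ by a constant, and it is the quantitative $\PT_{-1}$ inequality that forces genuine convergence.
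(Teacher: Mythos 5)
Your proposal is correct and follows essentially the same route as the paper: the normalized inequality $(\ast)$ with its vanishing error term $\de e^{(\rho-S)/2}$ is exactly the paper's mechanism (its Lemma on Gromov products and the proof that $\rho_o$ is a $\PT_0$ metric divide the $\PT_{-1}$ inequality by $e^{\frac12\sum|ox_i|}$ and let the error $E_i\to 0$), and your hyperbolicity argument via $B\ge\rho$ is a minor variant of the paper's $\rho\le\max\{\rho_{1,2}+\rho_{3,4},\rho_{1,4}+\rho_{2,3}\}$. You additionally spell out the basepoint-independence of cross-ratios and the completeness of $\rho_o$, which the paper leaves implicit; both of your arguments for these are standard and correct.
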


\begin{theorem} \label{thm:2}
Let $(Z,\cM)$ be a complete and ptolemaic M\"obius space. Then there exists an asymptotically
$\PT_{-1}$ space $X$ such that $\d_{\infty}X$ with its canonical
M\"obius structure is M\"obius equivalent to $(Z,\cM)$.
\end{theorem}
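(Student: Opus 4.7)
The natural approach is to construct $X$ as a hyperbolic cone (a Bonk--Schramm-type filling) over $(Z,\cM)$. Concretely, I would pick any metric $\rho\in\cM$ and set
\[
X = Z\times(0,\infty),\qquad d\bigl((x,s),(y,t)\bigr) = 2\operatorname{arsinh}\!\left(\frac{\sqrt{\rho(x,y)^2+(s-t)^2}}{2\sqrt{st}}\right),
\]
modelled on the upper half-space formula for real hyperbolic space. Different choices of $\rho\in\cM$ yield mutually quasi-isometric cones with the same Möbius structure at infinity, so the construction is canonical up to quasi-isometry, and completeness of $\cM$ is what permits a workable choice of $\rho$.

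The plan then has three checks. First, verify that $d$ is a metric and that $X$ is (quasi-)geodesic; this is a standard Bonk--Schramm computation and requires only that $\rho$ is a metric. Second, identify $\d_\infty X$ with $Z$ and show the induced Möbius structure is $\cM$: the curves $s\mapsto(x,e^{-s})$ are asymptotically geodesic rays, every asymptote class arises this way, and a direct computation yields, at a basepoint $o=(o',1)$, the Gromov product $(x|y)_o = -\log\rho(x,y)+\tfrac12\log\rho(o',x)\rho(o',y)+O(1)$, which is precisely the Möbius-equivalence formula recovering $\rho$ up to rescaling.

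The third and main step is verifying the asymptotically $\PT_{-1}$ inequality on $X$. From the exact identity
\[
e^{\frac12 d((x,s),(y,t))} \;=\; \frac{\sqrt{\rho(x,y)^2+(s-t)^2}+\sqrt{\rho(x,y)^2+(s+t)^2}}{2\sqrt{st}},
\]
the required inequality, after clearing the common factor $\prod_i s_i^{-1/2}$, reduces to a ptolemy-type bound for quantities of the form $\rho_{ij}+s_i+s_j$ up to controlled multiplicative errors. I would split this into the ptolemy inequality for $\rho$ on $Z$ and the exact ptolemy identity on $\R$ for the heights $s_i$, and absorb the cross terms into the additive defect $\de\,e^{\rho/2}$. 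The expected main obstacle is obtaining a \emph{uniform} $\de>0$ over all quadruples: the delicate regime is when the heights $s_i$ are incomparable (some points near the ``top'' of the cone, others very shallow), which I plan to tame using the scaling symmetry $(x,s)\mapsto(x,\la s)$ of the cone metric to reduce to the benign case where all $s_i$ are of the same order, for which ptolemy on $\rho$ closes the estimate cleanly. As a fallback, if the continuous cone proves too delicate, one can replace $X$ by a quasi-isometric discrete hyperbolic filling (chains of $\rho$-balls of radii $\sim s$), for which the $\PT_{-1}$ estimate reduces even more directly to the ptolemy inequality on $Z$.
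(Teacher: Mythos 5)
Your overall strategy (a Bonk--Schramm-type cone over $Z$, identify the boundary, verify the $\PT_{-1}$ defect using the Ptolemy inequality on $Z$) is the same as the paper's, but the specific formula you chose breaks the argument at its central step. The paper uses the logarithmic cone metric $\rho((z,h),(z',h'))=2\log\bigl(\tfrac{d(z,z')+h\vee h'}{\sqrt{hh'}}\bigr)$ precisely because then $\sqrt{h_ih_j}\,e^{\rho_{ij}/2}=d(z_i,z_j)+h_i\vee h_j$ is \emph{exactly} affine in the base distances: the fourfold products in the asymptotic $\PT_{-1}$ inequality expand exactly into the Ptolemy expression for $d$ plus terms containing at most one base-distance factor each, and those are genuinely first order, i.e.\ of size $O(e^{\rho/2})$, hence absorbable into the additive defect $\de e^{\rho/2}$. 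With your $\operatorname{arsinh}$ formula one has instead $2\sqrt{st}\,e^{d/2}=\sqrt{\rho(x,y)^2+(s-t)^2}+\sqrt{\rho(x,y)^2+(s+t)^2}$, which is only \emph{comparable} (within a factor of $2$) to the affine quantity $\rho(x,y)+s+t$. Multiplicative comparability is useless here: the asymptotic $\PT_{-1}$ condition is a second-order inequality with only a first-order additive error allowed, so a bounded multiplicative distortion of each $e^{d_{ij}/2}$ destroys it (it would only give Gromov hyperbolicity). Your plan to "absorb the cross terms into $\de e^{\rho/2}$" is exactly the point that does not close, and the proposed rescue via "the scaling symmetry $(x,s)\mapsto(x,\la s)$" is not available: this map is \emph{not} an isometry of the cone over a general $(Z,\rho)$ (one must simultaneously rescale $\rho$, which changes the space), and even if it were, a single global scale factor cannot make four incomparable heights comparable. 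Relatedly, the triangle inequality for the exact $\operatorname{arsinh}$ formula over an arbitrary metric base is \emph{not} "a standard Bonk--Schramm computation"; their computation is for the logarithmic formula.

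Two further points would still need repair even if the metric formula were fixed. First, coning $Z\times(0,\infty)$ over \emph{all} of $Z$ with a metric $\rho\in\cM$ having no infinitely remote point produces a boundary $Z\sqcup\{*\}$, where $*$ is the extra class of sequences with $s_i\to\infty$; your $X$ would then have the wrong boundary. The paper avoids this by first choosing $\om\in Z$ and the metric $d_\om\in\cM$ with $\Om(d_\om)=\{\om\}$ — this is where the Ptolemy hypothesis enters via Theorem \ref{thm:chPT} — and coning over $Z_\om$, so that the extra point at the top of the cone is identified with $\om$; completeness of $\cM$ is then used to show every boundary point of the cone comes from $Z$. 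Second, Möbius equivalence is an exact condition on cross-ratio triples, so your Gromov product formula with an $O(1)$ error only yields a metric bilipschitz to a representative of $\cM$; you must compute the limiting Gromov product on the boundary exactly (the paper gets $(z|z')_{\om,o}=-\log|zz'|$ on the nose).
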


The results can be viewed as a characterization of the class of Gromov hyperbolic spaces,
whose boundary allows a canonical M\"obius structure.

In the proof we use a hyperbolic cone construction due to Bonk and Schramm \cite{BoS}.
which associates to a metric space $(Z,d)$ a cone
$(\Con(Z),\rho)$. We show in Proposition \ref{pr:mcone} that if
$(Z,d)$ is ptolemaic, then the cone is asymptotically $\PT_{-1}$.
This method can also be used obtain a characterization of visual Gromov
hyperbolic spaces
in the spirit 
of the \cite{BoS}. Recall that two metric spaces $X$ and $Y$ are roughly
similar, if there are constants $K, \la  > 0$ and a map $f:X\to Y$ such that
for all $x,y \in X$
$$|\la d_X(x,y)\,-\,d_Y(f(x),f(y) |\ \leq K$$
and in additional $\sup_{y\in Y} d_Y(y,f(X)) \le K$.

A theorem of Bonk and Schramm states that a visual Gromov 
hyperbolic space with doubling boundary is roughly similar to a convex subset
of the real hyperbolic space $\hyp^n$ for some integer $n$.

We have a version without conditions on the boundary:

\begin{theorem} \label{thm:3}
Every visual Gromov hyperbolic space is roughly similar to some
asymptotically $\PT_{-1}$ space.
\end{theorem}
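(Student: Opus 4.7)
The plan is to reduce to (a refinement of) Proposition~\ref{pr:mcone} through the Bonk--Schramm hyperbolic cone construction. Let $X$ be visual Gromov hyperbolic with basepoint $o$ and hyperbolicity constant $\de_0$, and set $Z=\d_\infty X$. By Bonk--Schramm's cone embedding theorem, $X$ is roughly similar to the hyperbolic cone $\Con(Z,d)$ for any visual metric $d$ on $Z$; since rough similarity is transitive, it suffices to produce one visual metric $d$ on $Z$ for which $\Con(Z,d)$ is asymptotically $\PT_{-1}$.

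First, I would rescale the metric on $X$ by a small factor $\la>0$ (itself a rough similarity), so that the rescaled hyperbolicity constant $\la\de_0$ is as small as we wish; for $\la$ small enough, $d(\xi,\eta):=e^{-(\xi|\eta)_o}$, computed in the rescaled metric, is a genuine metric on $Z$. The $4$-point characterisation of Gromov hyperbolicity,
\[
(\xi_1|\xi_3)_o + (\xi_2|\xi_4)_o \,\ge\, \min\bigl\{(\xi_1|\xi_2)_o+(\xi_3|\xi_4)_o,\ (\xi_1|\xi_4)_o+(\xi_2|\xi_3)_o\bigr\} - 2\la\de_0,
\]
exponentiates to an approximate ptolemy inequality for $d$ with multiplicative factor $e^{2\la\de_0}$. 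In general this $d$ is only approximately ptolemaic --- exact ptolemy typically fails unless $\de_0=0$ --- which is the novel feature compared with Proposition~\ref{pr:mcone}.

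The main step is to verify that $\Con(Z,d)$ nevertheless satisfies the asymptotically $\PT_{-1}$ inequality. I would adapt the proof of Proposition~\ref{pr:mcone}: for points $X_i=(\xi_i,h_i)\in\Con(Z,d)$ one has $e^{\rho_{i,j}/2}=(d(\xi_i,\xi_j)+h_i\vee h_j)/\sqrt{h_ih_j}$, so one expands both sides of the $\PT_{-1}$ inequality and applies the approximate ptolemy estimate on $Z$ to the products $d(\xi_i,\xi_k)\,d(\xi_j,\xi_l)$. The multiplicative error $e^{2\la\de_0}-1$ enters the cone computation multiplied by terms of size at most $e^{\rho/2}$, where $\rho=\max_{i,j}\rho_{i,j}$, and is thereby absorbed into the additive slack $\de\,e^{\rho/2}$ of the asymptotic $\PT_{-1}$ definition (choosing the constant $\de$ there large enough, or equivalently $\la$ small enough). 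The main obstacle is precisely this bookkeeping --- propagating the approximate ptolemy on $Z$ through the cone metric formula so that the resulting error is dominated by a constant multiple of $e^{\rho/2}$ --- that is, an approximate-ptolemy strengthening of Proposition~\ref{pr:mcone}.
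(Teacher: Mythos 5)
Your overall framework (Bonk--Schramm cone over a visual metric, plus a cone-is-$\PT_{-1}$ statement in the spirit of Proposition \ref{pr:mcone}) is the right one, but the key lemma you propose --- that the cone over an only \emph{approximately} ptolemaic metric is still asymptotically $\PT_{-1}$ --- is false, and the bookkeeping you describe cannot be made to work. The multiplicative error $C=e^{2\la\de_0}$ in the approximate Ptolemy inequality for $d$ enters the cone computation on the dominant terms, which are products of two boundary distances; after dividing by $\sqrt{h_1h_2h_3h_4}$ these are the quantities $e^{\frac{1}{2}(\rho_{1,2}+\rho_{3,4})}$, of size up to $e^{\rho}$, not $e^{\rho/2}$. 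So the error you must absorb is of order $(C-1)e^{\rho}$, and $(C-1)e^{\rho}/e^{\rho/2}\to\infty$ as $\rho\to\infty$ for any fixed $C>1$; no choice of $\de$ or of $\la>0$ saves this. More decisively, Theorem \ref{rPT} together with the identification of the canonical M\"obius structure of $\d_{\infty}\Con(Z,d)$ with that of $d$ shows that if $\Con(Z,d)$ were asymptotically $\PT_{-1}$, then $d$ would have to satisfy the \emph{exact} Ptolemy inequality (the Ptolemy property being M\"obius invariant). Hence your proposed strengthening of Proposition \ref{pr:mcone} is refuted by the paper's own results: an exactly ptolemaic boundary metric is not a convenience but a necessity.

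The missing idea, which is how the paper proceeds, is to manufacture an exactly ptolemaic visual metric by a snowflake trick rather than by shrinking the hyperbolicity constant. One takes $\ep>0$ with $e^{-\ep(\cdot|\cdot)_o}$ bilipschitz to a genuine metric $\rho$ on $\d_{\infty}X$, and then passes to $\rho^{1/2}$, which is \emph{exactly} ptolemaic by Lytchak's observation that the square root of any metric is ptolemaic (\cite[Proposition 8]{FS1}); since $\rho^{1/2}$ is bilipschitz to $e^{-\frac{\ep}{2}(\cdot|\cdot)_o}$, the rescaled space $(X,\frac{\ep}{2}d_X)$ satisfies the hypotheses of Proposition \ref{pr:bl}, whose proof applies Proposition \ref{pr:mcone} verbatim to the truncated cone over $\rho^{1/2}$, and the rescaling is a rough similarity. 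If you replace your rescaling step by this square-root step, the rest of your outline goes through without any need to modify Proposition \ref{pr:mcone}.
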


We discuss some open questions in Remark \ref{rem:questions}
The structure of the paper is as follows. In section \ref{sec:pre} we
recall the basic facts about metric M\"obius geometry and boundary continuous
Gromov hyperbolic spaces.
In section
\ref{sec:asptkappa} we introduce the $\PT_{\kap}$ property, discuss asymptotically $\PT_{-1}$ spaces
and prove Theorem \ref{thm:1}. In section
\ref{sec:hypcone} we introduce hyperbolic cones and prove Theorem \ref{thm:2}.
The proof of Theorem \ref{thm:3} is in section
\ref{sec:ap}.

\section{Preliminaries} \label{sec:pre}

\subsection{M\"obius Structures} 

Let
$Z$ 
be a set which contains at least
two points.
An {\em extended metric} on 
$Z$ 
is a map 
$d:Z\times Z \to [0,\infty ]$,
such that
there exists
a set
$\Om(d) \sub Z$ with
cardinality
$\# \Om(d) \in \{ 0,1\}$, 
such that 
$d$
restricted to the set
${Z\setminus \Om(d)}$ 
is a metric 
(taking only values in $[0,\infty)$) and such that
$d(z,\omega)=\infty$ 
for all 
$z\in Z\setminus \Om(d)$, $\om \in \Om(d)$. 
Furthermore $d(\omega,\omega)=0$.\\
If
$\Om(d)$
is not empty,
we call the unique
$\om \in \Om(d)$ simply 
{\it the
point at infinity} of
$(Z,d)$.
We write 
$Z_{\om}$ for the set
$Z\sm \{\om\}$.

The topology considered on $(Z,d)$ is the topology with the 
basis consisting of all open distance balls $B_{r}(z)$
around points in $z\in Z_{\om}$ and the
complements $D^C$ of all closed distance balls $D=\overline{B}_r(z)$. \\

We call an extended metric space 
$(Z,d)$ {\em complete}, if first every
Cauchy sequence in $Z_{\om}$ converges and secondly
if the infinitely remote point $\om$ exists in case that $Z_{\om}$
is unbounded.
For example the real line
$(\R,d)$,
with its standard metric is {\em not} complete (as extended metric space),
while
$(\R\cup\{\infty\},d)$ is complete.

We say that a quadruple 
$(x,y,z,w)\in Z^4$ is 
{\em admissible}, if no entry occurs three or
four times in the quadruple.
We denote with
$Q\sub Z^4$
the set of 
admissible quadruples.
We define the {\em cross ratio triple} as the map
$\crt:\ Q \to \Sigma \subset \R P^2$ 
which maps 
admissible quadruples to points in the real projective plane defined by
$$\crt(x,y,z,w)=(d(x,y)d(z,w): d(x,z)d(y,w) : d(x,w)d(y,z)),$$
here 
$\Sigma$ 
is the subset of points 
$(a:b:c) \in \R P^2$, 
where
all entries 
$a,b,c$ 
are nonnegative or all entries are non positive.

We use the standard conventions for the calculation with 
$\infty$.
If 
$\infty$ occurs once in 
$Q$, say 
$w=\infty$,
then
$\crt(x,y,z,\infty)=(d(x,y):d(x,z):d(y,z))$.
If 
$\infty$ 
occurs twice , say $z=w=\infty$ then
$\crt(x,y,\infty,\infty)=(0:1:1)$.

Similar as for the classical cross ratio there are six possible definitions by
permuting the entries and we choose the above one. 

A map
$f:Z\to Z'$
between two extended metric spaces
is called
{\em M\"obius}, if 
$f$ is injective and for all
admissible quadruples
$(x,y,z,w)$ of
$X$,
$$\crt(f(x),f(y),f(z),f(w))=\crt(x,y,z,w).$$
M\"obius maps are continuous.

Two extended metric spaces
$(Z,d)$ and
$(Z,d')$ are
{\em M\"obius equivalent},
if there exists a bijective
M\"obius map
$f:Z\to Z$.
In this case also
$f^{-1}$ is a M\"obius map and
$f$ is in particular a
homeomorphism.\\
We say that two extended metrics
$d$ and $d'$
on the same set 
$Z$ are
{\em M\"obius equivalent},
if the identity map
$\id:(Z,d)\to (Z,d')$
is a M\"obius map.
M\"obius equivalent metrics define
the same topology on
$Z$.
It is also not difficult to check that
for M\"obius equivalent metrics
$d$ and $d'$ the space $(Z,d)$ 
is complete if and only if
$(Z,d')$ is complete.

The M\"obius equivalence of metrics
of metrics on a given set $Z$ is clearly
an equivalence relation.
A {\em M\"obius structure} 
$\cM$ on $Z$
is an equivalence class 
of extended metrics on
$Z$.

A pair $(Z,\cM)$ of a set $Z$ together with a
M\"obius structure $\cM$ on $Z$ is called a 
{\em M\"obius space}.
A M\"obius structure well defines a topology
on $Z$, thus a M\"obius space is in particular a
topological space.
Since completeness is also a M\"obius invariant
we can speak about
{\em complete} M\"obius structures.

In general two metrics in
$\cM$ can look very different.
However if two metrics have the same remote
point at infinity, then they are homothetic (see \cite{FS2}):

\begin{lemma}\label{lem:homothety}
Let $\cM$ be a M\"obius structure on a set
$X$, and let
$d,d'\in \cM$, such that 
$\om \in X$ is the remote point of
$d$ and of $d'$. Then there exists $\la >0$,
such that
$d'(x,y)=\la d(x,y)$ for all 
$x,y \in X$.
\end{lemma}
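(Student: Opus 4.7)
The plan is to exploit the cross ratio preservation and the simple form that $\crt$ takes when one entry is the common remote point $\om$.

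First I would reduce to the case $\#X\ge 3$, since otherwise $X_\om$ contains at most one point and there is nothing to prove (any $\la$ works). Assuming $\#X_\om\ge 2$, the main input is the convention recalled just above the lemma: for any admissible triple of distinct points $x,y,z\in X_\om$, the quadruple $(x,y,z,\om)$ is admissible and
\[
\crt(x,y,z,\om)=\bigl(d(x,y):d(x,z):d(y,z)\bigr),
\]
and the analogous identity holds for $d'$. Since $\id:(X,d)\to(X,d')$ is M\"obius it preserves the cross ratio triple, so
\[
\bigl(d(x,y):d(x,z):d(y,z)\bigr)=\bigl(d'(x,y):d'(x,z):d'(y,z)\bigr)
\]
as points of $\R P^2$. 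This means the three ratios $d'(x,y)/d(x,y)$, $d'(x,z)/d(x,z)$ and $d'(y,z)/d(y,z)$ all coincide, giving a common value $\la(x,y,z)>0$ for any triple of distinct points in $X_\om$.

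Next I would show $\la(x,y,z)$ is independent of the triple. If $X_\om$ has exactly two points this is vacuous. Otherwise, fix a reference pair $x_0,y_0\in X_\om$ and set $\la:=d'(x_0,y_0)/d(x_0,y_0)$. Given any other pair $u,v\in X_\om$ with $u\ne v$, I would connect the pair $(x_0,y_0)$ to the pair $(u,v)$ by a short chain of triples sharing a common pair. Concretely, if $\{u,v\}\cap\{x_0,y_0\}=\es$, apply the previous paragraph to the triples $(x_0,y_0,u)$ and $(x_0,u,v)$; the first yields $d'(x_0,u)/d(x_0,u)=\la$, and the second then forces $d'(u,v)/d(u,v)=\la$. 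The remaining cases where the pairs share one or two points are handled by the same trick with one intermediate triple.

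The only real obstacle is being careful with the cross ratio convention when the point at infinity appears as one of the entries, and with the degenerate cases of very small $X$; once the identity $\crt(x,y,z,\om)=(d(x,y):d(x,z):d(y,z))$ is in hand, the rest is the chaining argument above. This yields $d'(u,v)=\la\,d(u,v)$ for all $u,v\in X_\om$, and the equality extends trivially to pairs involving $\om$ since both sides are $\infty$ (or $0$ when $u=v=\om$).
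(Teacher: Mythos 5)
Your proof is correct. Note that the paper does not actually supply its own argument for this lemma --- it is quoted from \cite{FS2} --- so there is nothing internal to compare against; but your route is the natural one and is sound: for distinct $x,y,z\in X_\om$ the quadruple $(x,y,z,\om)$ is admissible, the convention $\crt(x,y,z,\om)=(d(x,y):d(x,z):d(y,z))$ together with cross-ratio preservation by $\id$ forces the three ratios $d'/d$ on the edges of the triple to agree, and your chaining through a fixed reference pair (plus the trivial small-cardinality and $\om$-involving cases) globalizes the constant. The only point worth making explicit is that equality in $\R P^2$ gives proportionality by a \emph{nonzero} scalar, which is positive here because all six distances between distinct points of $X_\om$ are strictly positive.
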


An extended metric space $(Z,d)$ is called a {\it Ptolemy space}, if for all 
quadruples of points $\{ x,y,z,w\} \in Z^4$
the {\it Ptolemy inequality} holds
\begin{displaymath}
d(x,y) \, d(z,w) \; \le \; d(x,z) \, d(y,w) \; 
+ \; d(x,w) \, d(y,z)
\end{displaymath}

We can reformulate this condition in 
terms of the cross ratio triple. 
Let 
$\Delta \subset \Sigma$ 
be the set of points 
$(a:b:c)\in \Sigma$, such that
the entries 
$a,b,c$ 
satisfy the triangle inequality. This is obviously well defined.

Then an
extended space is  Ptolemy, if 
$\crt(x,y,z,w) \in \Delta$ 
for all allowed quadruples $Q$.

This description shows that 
the Ptolemy property is M\"obius invariant and thus a property
of the M\"obius structure
$\cM$.

The importance of the Ptolemy property comes from the following fact (see e.g. \cite{FS2}):

\begin{theorem} \label{thm:chPT}
A M\"obius structure
$\cM$ on a set
$Z$ is
Ptolemy, if and only if for all
$\om\in Z$ there
exists 
$d_{\om}\in\cM$ with
$\Om(d_{\om})=\{\om\}$. 
\end{theorem}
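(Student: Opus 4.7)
I plan to prove both directions, treating the reverse direction $(\Leftarrow)$ directly from the metric property of each $d_\om$, and the forward direction $(\Rightarrow)$ by an explicit M\"obius inversion construction in which the Ptolemy hypothesis is used precisely to verify the triangle inequality.

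For $(\Leftarrow)$, I assume that for every $\om\in Z$ there exists $d_\om\in\cM$ with $\Om(d_\om)=\{\om\}$. Given any admissible quadruple $(x_1,x_2,x_3,x_4)$, at least two distinct values occur among its entries, so I may take $\om$ to be one of the entries, say $\om=x_1$. Since M\"obius equivalence preserves the cross-ratio triple, I compute $\crt(x_1,x_2,x_3,x_4)$ using $d_\om$: by the convention for the point at infinity the triple reduces to $(d_\om(x_3,x_4):d_\om(x_2,x_4):d_\om(x_2,x_3))$, and the triangle inequality of the genuine metric $d_\om$ on $Z\sm\{\om\}$ places this triple in $\Delta$. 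The remaining degenerate admissible quadruples produce cross-ratio triples of the form $(0:1:1)$, $(1:0:1)$ or $(1:1:0)$, all obviously in $\Delta$.

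For $(\Rightarrow)$, I assume $(Z,\cM)$ is Ptolemy. Fix $\om\in Z$ and choose any $d\in\cM$; if $\om$ is already the point at infinity of $d$, take $d_\om=d$, otherwise define the M\"obius inversion
$$d_\om(x,y)\ :=\ \frac{d(x,y)}{d(x,\om)\,d(y,\om)}$$
for $x,y\in Z\sm\{\om\}$, together with $d_\om(x,\om):=\infty$ for $x\neq\om$ and $d_\om(\om,\om):=0$; if $d$ has a remote point $\om'\neq\om$, the formula is extended to pairs involving $\om'$ by the standard limiting convention, yielding $d_\om(x,\om')=1/d(x,\om)$. Symmetry, positive definiteness, and $\Om(d_\om)=\{\om\}$ are immediate, and a short calculation --- writing $\al_p:=d(p,\om)$ and observing that each of the three entries of the $d_\om$-cross-ratio of $(x,y,z,w)$ picks up the common scalar $1/(\al_x\al_y\al_z\al_w)$, which drops out projectively --- shows that $d$ and $d_\om$ produce the same cross-ratio triple on every admissible quadruple, so $d_\om\in\cM$.

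The main obstacle, and the only place where the Ptolemy hypothesis is used, is the triangle inequality $d_\om(x,z)\le d_\om(x,y)+d_\om(y,z)$. Clearing denominators by multiplying through by $d(x,\om)\,d(y,\om)\,d(z,\om)$ transforms it into the equivalent statement
$$d(x,z)\,d(y,\om)\ \le\ d(x,y)\,d(z,\om)\ +\ d(x,\om)\,d(y,z),$$
which is exactly the Ptolemy inequality for the quadruple $(x,z,y,\om)$ with respect to $d$. Since Ptolemy is a M\"obius invariant of $\cM$ and holds by hypothesis, the triangle inequality for $d_\om$ follows, and the construction is complete. The extended cases in which $x$, $y$, or $z$ coincide with the remote point $\om'$ of $d$ reduce to limits of the same inequality and need no extra argument.
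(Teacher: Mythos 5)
Your proof is correct and takes essentially the same route as the paper, which only sketches the argument by citing \cite{FS2} and writing down exactly the metric involution $d_{\om}(z,z')=d(z,z')/(d(\om,z)d(\om,z'))$ that you construct, with the Ptolemy inequality for the quadruple $(x,z,y,\om)$ supplying the triangle inequality for $d_{\om}$. Your converse direction (reading off the triangle inequality of $d_{\om}$ from the cross-ratio triple with $\om$ placed at infinity) is the easy half that the paper leaves implicit, and it is handled correctly, including the degenerate admissible quadruples.
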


The metric $d_{\om}$ can be obtained by metric involution. If
$d$ is a metric on $Z$ then 
$$d_{\om}(z,z')=\frac{d(z,z')}{d(\om,z)d(\om,z')}$$
gives the required metric.

\subsection{Boundary continuous Gromov hyperbolic spaces}

We recall some basic facts from the theory of
Gromov hyperbolic spaces, compare e.g \cite{BS}

A metric space $(X,d)$ is called
{\it Gromov hyperbolic}
if there exists
some $\de >0$ such that for all quadruples
$x_1,x_2,x_3,x_4 \in X$ we have
$$\rho_{1,3}+\rho_{2,4} \ \le \ \max \{\rho_{1,2}+\rho_{3,4}\, , \,\rho_{1,4}+\rho_{2,3}\} \ +\ \de,$$
where $\rho_{i,j} =d(x_i,x_j)$.

For three points $x,y,z \in X$ one defines the {\it Gromov product}
$$(x|y)_z\ =\ \frac{1}{2}\,(\, |zx| \,+\, |zy| \,-\, |xy|\,)\ ,$$
where we write $|xy|$ as a short version of $d(x,y)$.

A sequence $(x_i)$ {\it converges at infinity}, if for some (and hence every) 
basepoint $o\in X$ we have
$\lim_{i,j\to\infty} (x_i|x_j)_o =\infty$.
Two such sequences $(x_i)\, ,(y_i)$are called {\it equivalent},
if $\lim (x_i|y_i)_o = \infty$.
The boundary $\d_{\infty}X$ consist of the equivalence classes of these sequences.

For two points $\zeta,\, \xi \in \d_{\infty}X$ and a base point $o\in X$
one defines
\begin{equation} \label{eq:gpbd}
(\zeta|\xi)_o\ =\ \inf \liminf_{i\to \infty} (x_i|y_i)_o 
\end{equation}
where the infimum is taken over all sequences $(x_i) \in \zeta$ and
$(y_i) \in \xi$. 
In a similar way we also define
$(x|\xi)_o$, where $o,x\in X$ and $\xi\in \d_{\infty}X$.

We remark that the sequence
$(x_i|y_i)_o$ does not necessarily converge, therefore
we need the complicated definition in (\ref{eq:gpbd}).

A Gromov hyperbolic space is called
{\it boundary continuous},
if the Gromov product extends continuously to the boundary in the following way:
if $(x_i),\, (y_i)$ are sequences in $X$ which converge to points
$\ov{x},\, \ov{y}$ in $X$ or $\d_{\infty}X$, then
$(x_i|y_i)_o\to(\ov{x}|\ov{y})_o$ for all base points $o\in X$.
For boundary continuous spaces one can define nicely Busemann functions. If
$\om \in \d_{\infty}X$ and $o\in X$ a base point, then
\begin{equation}\label{eq:bf}
 b_{\om,o}(x)=\lim_{i\to\infty}\,(|xw_i|\,-\,|w_io|\,)
\end{equation}
where $w_i\to \om$
is the Busemannfunction of $\om$ normalized to have the value $0$ at the point $o\in X$.
We have the formula:
\begin{equation} \label{eq:bfformula}
b_{\om,o}(x) =(\om|o)_x-(\om|x)_o
 \end{equation}

We also define form
$\om\in \d_{\infty}X$ a base point $o\in X$ and $x,y,z$ from X or $\d_{\infty}X\sm\{\om\}$
$$(x|y)_{\om,o}= (x|y)_o\,-\,(\om|x)_o\,-\,(\om|y)_o.$$


\section{Asymptotic $\PT_{\kappa}$ spaces} \label{sec:asptkappa}

In subsection \ref{subsec:ptkappa} we define general $\PT_{\kappa}$ spaces.
Then in section \ref{subsec:asptkappa} we introduce the more general notion
of asymptotic $\PT_{\kap}$ spaces and 
compare it in section \ref{subsec:ascat} with the notion of
asymptotically $\CAT(\kap)$ spaces.
Finally in section \ref{subsec:properties}
we prove Theorem \ref{thm:1}.

\subsection{The $PT_{\kappa}$ inequality} \label{subsec:ptkappa}

A metric space $(X,d)$ is called a $\PT_{\kappa}$-space, if for points $x_1,x_2,x_3,x_4 \in X$, we have
\begin{equation} \label{eq:ptkappa}
\skap(\frac{\rho_{1,3}}{2})\, \skap(\frac{\rho_{2,4}}{2}) \ \le\ 
\skap(\frac{\rho_{1,2}}{2})\, \skap(\frac{\rho_{3,4}}{2}) \ +\
\skap(\frac{\rho_{1,4}}{2})\, \skap(\frac{\rho_{2,3}}{2})
\end{equation}
where $\rho_{i,j}=d(x_i,x_j)$ and $\skap$ is the function
\begin{equation*}
  \skap(x)=
     \begin{cases}
     \frac{1}{\sqrt{\kappa}}\sin(\sqrt{\kappa}x) & \text{if} \,\,\,\kappa>0, \\
     x & \text{if} \,\,\,\kappa=0, \\
     \frac{1}{\sqrt{-\kappa}}\sinh(\sqrt{-\kappa}x) & \text{if} \,\,\,\kappa<0.
     \end{cases}
\end{equation*}

In the case that $\kap >0$ we assume in addition that the diameter is bounded by
$\frac{\pi}{\sqrt{\kap}}$.

It is well known that the standard space forms $M_{\kappa}^n$ of constant curvature $\kap$
satisfy the $\PT_{\kap}$ inequality. For the euclidean space this is the classical ptolemaic
inequality and for the other spaces it is proved in \cite {H}.
By comparison we obtain the result also for $\CAT(\kap)$-spaces.

\begin{prop}
Every $\CAT(\kap)$ space satisfies the $\PT_{\kap}$ inequality.
\end{prop}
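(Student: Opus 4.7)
The plan is to reduce to the validity of the $\PT_\kap$ inequality in the model plane $M_\kap^2$, known classically from \cite{H}, via a four-point sub-embedding (Reshetnyak's majorization) for $\CAT(\kap)$ spaces. Specifically, given any four points $x_1,x_2,x_3,x_4 \in X$ in cyclic order, I would produce points $\bar x_1,\bar x_2,\bar x_3,\bar x_4 \in M_\kap^2$ with equal ``side'' distances
$$d(\bar x_i,\bar x_{i+1}) \;=\; d(x_i,x_{i+1}) \qquad (i \text{ mod } 4),$$
and with both diagonals expanded:
$$d(\bar x_1,\bar x_3) \;\ge\; d(x_1,x_3), \qquad d(\bar x_2,\bar x_4) \;\ge\; d(x_2,x_4).$$
When $\kap > 0$ the standing diameter bound $\pi/\sqrt{\kap}$ ensures this configuration fits inside $M_\kap^2$.

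Concretely, one builds the comparison quadrilateral by taking a comparison triangle for $x_1 x_2 x_3$ and one for $x_1 x_3 x_4$ in $M_\kap^2$ and gluing them along the common edge $\bar x_1 \bar x_3$ with $\bar x_2,\bar x_4$ on opposite sides. The $\CAT(\kap)$ condition, via Alexandrov's angle comparison, forces $d(\bar x_2,\bar x_4) \ge d(x_2,x_4)$; a symmetric construction with the roles of the diagonals swapped gives the inequality for $(1,3)$.

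With the sub-embedding in hand, I would apply the $\PT_\kap$ inequality in the model to $\bar x_1,\bar x_2,\bar x_3,\bar x_4$. Since $\skap$ is strictly increasing on $[0,\pi/(2\sqrt{\kap}))$ when $\kap>0$ (and on $[0,\infty)$ otherwise), and the diameter hypothesis keeps all arguments in that range, the left-hand side of (\ref{eq:ptkappa}) applied to the $\bar x_i$ dominates the one applied to the $x_i$, while the right-hand sides agree term by term. The desired inequality in $X$ follows.

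The main obstacle is the Reshetnyak-type sub-embedding step itself; this is a standard result in $\CAT(\kap)$ geometry and need not be reproved here. Once it is available, the proposition reduces to monotonicity bookkeeping and an appeal to the classical Ptolemy inequality in $M_\kap^2$.
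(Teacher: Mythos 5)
Your argument is correct and is essentially the paper's proof: both reduce to the four-point sub-embedding condition for $\CAT(\kap)$ spaces (the paper cites \cite[Proposition 1.11]{BH}, you derive the same configuration via Reshetnyak-style gluing of two comparison triangles), and then apply the classical $\PT_{\kap}$ inequality in $M_{\kap}^2$ together with monotonicity of $\skap$. Note that your single glued configuration already handles both diagonals (one with equality), so the ``symmetric construction'' for the $(1,3)$ diagonal is not actually needed.
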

\begin{proof}
A $\CAT(\kappa)$ spaces, $\kappa \in \mathbb{R}$, can be characterized by a $4$-point condition,
\cite[Proposition 1.11]{BH}. Suppose $x_i \in X$  for $0 \leq i \leq 4$, with $x_0=x_4$, and $x_0=x_4$,  there exist four 
points $\bar{x_i} \in M_{\kappa}^2$ with $\bar{x}_0=\bar{x}_4$ such that
\[
  d(x_i,x_{i-1})= |\bar{x}_i-\bar{x}_{i-1}|, 1 \leq i \leq 4,
\]
\[
  d(x_1,x_3)\leq |\bar{x}_1-\bar{x}_3| \;\;\; \text{and} \;\;\; d(x_2,x_4)\leq |\bar{x}_2-\bar{x}_4|.
\]
Since 
$M_{\kappa}^2$ satisfy the $PT_{\kappa}$ inequality the result follows.
\end{proof}

\begin{remark} \label{rem:questions}
The following questions arises naturally: is a geodesic $\PT_{\kap}$ space 
$\CAT(\kap)$? 
A positive answer would imply a nice four point characterization of $\CAT(\kap)$ spaces.
In the case $\kap =0$ this is not true (see \cite{FLS}), but
the counterexamples are not locally compact and there are partial positive results
in the locally compact case (see e.g. \cite{BuFW}, \cite{MS}).
For $\kap < 0$ the question is completely open.
 
\end{remark}

\subsection{Asymptotic $\PT_{\kappa}$ inequality for $\kappa<0$} \label{subsec:asptkappa}

One obtains the asymptotic $\PT_{\kap}$ property (for $\kap < 0$) by weakening equation the 
$\PT_{\kap}$ inequality and allowing some error term. Instead of equation (\ref{eq:ptkappa})
we require that for some universal $\delta \ge 0$ we have

\begin{multline*} 
\skap(\frac{\rho_{1,3}}{2})\,  \skap(\frac{\rho_{2,4}}{2}) \ \ \ 
\le  \\
\skap(\frac{\rho_{1,2}}{2}) \skap(\frac{\rho_{3,4}}{2}) \ +\ 
\skap(\frac{\rho_{1,4}}{2}) \skap(\frac{\rho_{2,3}}{2}) \ + \ \delta e^{\frac{\sqrt{-\kap}}{2}\rho}
\end{multline*}

It is more convenient to formulate this condition using only exponential functions.
It is easy to check that these conditions are equivalent.

\begin{definition}
A metric space is called asymptotic $PT_{\kappa}$ for some $\kap <0$, if there exists 
some $\delta\geq 0$ such that for all quadruples $x_1,x_2,x_3,x_4 \in X$ we have
\[
  e^{\frac{\sqrt{-\kappa}}{2}(\rho_{1,3}+\rho_{2,4})}\leq e^{\frac{\sqrt{-\kappa}}{2}(\rho_{1,2}+\rho_{3,4})}+e^{\frac{\sqrt{-\kappa}}{2}(\rho_{1,4}+\rho_{2,3})}+\delta e^{\frac{\sqrt{-\kappa}}{2}\rho}
\]
Here $\rho_{i,j} =d(x_i,x_j)$ and $\rho=\max_{i,j}\rho_{i,j}$. 
\end{definition}

\begin{remark}
The asymptotic $\PT_{\kap}$ condition is a strong curvature condition. It implies e.g.
that $X$ does not contain flat strips: if a space contains a flat strip of width $a>0$,
then it contains quadruples with
$\rho_{1,3}=\rho_{2,4}=\sqrt{t^2+a^2}$,
$\rho_{1,2}=\rho_{3,4}=t$ and
$\rho_{2,3}=\rho_{1,4}=a$.
These quadruples do not satisfy the asymptotic $\PT_{\kap}$ inequality
for fixed $\kap <0$, $\delta \ge 0$ and $t\to \infty$.
 \end{remark}

\begin{prop}
Let $0 > \kap' > \kap$. If $X$ is
asymptotic $PT_{\kappa}$, then $X$  is asymptotic $PT_{\kappa'}$.
\end{prop}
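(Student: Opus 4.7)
The plan is to reduce the $\PT_{\kappa'}$ inequality to the $\PT_\kappa$ inequality by a simple exponentiation trick. Set $a = \sqrt{-\kappa}$ and $a' = \sqrt{-\kappa'}$. Since $0 > \kappa' > \kappa$, we have $0 < a' < a$, so the ratio $t := a'/a$ lies in $(0,1)$. The crucial observation is that for any nonnegative $\sigma$,
\[
e^{\frac{a'}{2}\sigma} \;=\; \bigl(e^{\frac{a}{2}\sigma}\bigr)^{t},
\]
which lets us convert one inequality into the other by raising to the power $t$.

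Concretely, given a quadruple $x_1,x_2,x_3,x_4 \in X$, I would abbreviate
\[
A = e^{\frac{a}{2}(\rho_{1,3}+\rho_{2,4})},\quad B = e^{\frac{a}{2}(\rho_{1,2}+\rho_{3,4})},\quad C = e^{\frac{a}{2}(\rho_{1,4}+\rho_{2,3})},\quad R = e^{\frac{a}{2}\rho}.
\]
The asymptotic $\PT_\kappa$ hypothesis gives a constant $\delta\ge 0$ with $A \le B + C + \delta R$. Raising both sides to the power $t$, and using the monotonicity of $x\mapsto x^t$ on $[0,\infty)$, yields
\[
A^{t} \;\le\; (B + C + \delta R)^{t}.
\]

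The only nontrivial input is the subadditivity of $x\mapsto x^t$ on $[0,\infty)$ for $t\in(0,1)$: since this function is concave with value $0$ at $0$, one has $(u+v)^{t}\le u^{t}+v^{t}$ for all $u,v\ge 0$, and iterating gives $(B + C + \delta R)^{t} \le B^{t} + C^{t} + \delta^{t} R^{t}$. Combining with the previous display and translating back through $A^{t} = e^{\frac{a'}{2}(\rho_{1,3}+\rho_{2,4})}$ etc., one obtains
\[
e^{\frac{a'}{2}(\rho_{1,3}+\rho_{2,4})} \;\le\; e^{\frac{a'}{2}(\rho_{1,2}+\rho_{3,4})} + e^{\frac{a'}{2}(\rho_{1,4}+\rho_{2,3})} + \delta'\, e^{\frac{a'}{2}\rho}
\]
with $\delta' := \delta^{t}$, which is exactly the asymptotic $\PT_{\kappa'}$ inequality.

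There is essentially no obstacle: the argument is purely algebraic, uses no geometry beyond the hypothesis, and produces an explicit constant $\delta'=\delta^{\sqrt{\kappa'/\kappa}}$. The only point worth double-checking is the subadditivity of $x\mapsto x^t$, which follows from concavity plus $0^t=0$, and the fact that all quantities involved are manifestly nonnegative.
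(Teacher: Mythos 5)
Your argument is correct and is essentially identical to the paper's own proof: both raise the $\PT_{\kappa}$ inequality to the power $t=\sqrt{\kappa'/\kappa}\in(0,1)$ and invoke the subadditivity $(u+v)^t\le u^t+v^t$ for nonnegative arguments. Your version is slightly more explicit in tracking the constant $\delta'=\delta^{t}$, but there is no substantive difference.
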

\begin{proof}
From the asymptotic $PT_{\kappa}$ inequality, we obtain that
\[
  e^{\frac{\sqrt{-\kappa}}{2}(\rho_{1,3}+\rho_{2,4})}\leq e^{\frac{\sqrt{-\kappa}}{2}(\rho_{1,2}+\rho_{3,4})}+e^{\frac{\sqrt{-\kappa}}{2}(\rho_{1,4}+\rho_{2,3})}+\delta e^{\frac{\sqrt{-\kappa}}{2}\rho}
\]
Here $\rho=\max_{i,j}\rho_{i,j}$ for $i,j=1,2,3,4$. \\
Since we know that for $0\leq x \leq 1$
\[
  (a+b)^x\leq a^x+b^x, a >0, b>0.
\]
Hence
\begin{multline*}
e^{\frac{\sqrt{-\kappa'}}{2}(\rho_{1,3}+\rho_{2,4}}= (e^{\frac{\sqrt{-\kappa}}{2}(\rho_{1,3}+\rho_{2,4})})^{\sqrt{\frac{-\kappa'}{-\kappa}}}\\
\leq (e^{\frac{\sqrt{-\kappa}}{2}(\rho_{1,2}+\rho_{3,4})} + e^{\frac{\sqrt{-\kappa}}{2}(\rho_{1,4}+\rho_{2,3})}+\delta e^{\frac{\sqrt{-\kappa}}{2}\rho})^{\sqrt{\frac{-\kappa'}{-\kappa}}}\\
\leq e^{\frac{\sqrt{-\kappa'}}{2}(\rho_{1,2}+\rho_{3,4})} + e^{\frac{\sqrt{-\kappa'}}{2}(\rho_{1,4}+\rho_{2,3})}+\delta' e^{\frac{\sqrt{-\kappa'}}{2}\rho}
\end{multline*}
It satisfies the asymptotic $PT_{\kappa'}$ inequality.
\end{proof}

By scaling an asymptotic $\PT_{\kap}$ space with the factor $\frac{1}{\sqrt{-\kap}}$ we obtain an
asymptotic $\PT_{-1}$ space.
Therefore we will discuss in the sequel mainly $\PT_{-1}$ spaces.

\subsection{Asymptotically $\CAT(\kap)$ spaces} \label{subsec:ascat}

We relate the asymptotic $\PT(\kap)$ property to some condition which is formulated in 
familiar comparison terms.

\begin{definition} \label{def:ascat}
 Let $\kap <0$. A metric space $(X,d)$ is called {\it asymptotically} $\CAT(\kap)$, if
there exists some $\delta \ge 0$ such that for every quadruple of points
$x_1,x_2,x_3,x_4$ in $X$ there are comparison points $\ov x_1,\ov x_2,\ov x_3,\ov x_4$ in
$M^2_{\kap}$ such that 
\newline
$d(x_1,x_2) =|\ov x_1 \ov x_2|,\ \ \  d(x_2,x_3)=|\ov x_2 \ov x_3|,$
\newline
$d(x_3,x_4)=|\ov x_3 \ov x_4|,\ \ \   d(x_4,x_1)=|\ov x_4 \ov x_1|, $
\newline
$d(x_1,x_3)\le |\ov x_1 \ov x_3|,$
\newline  
$\skap(\frac{d(x_2,x_4)}{2})\, \le\, \skap(\frac{|\ov x_2 \ov x_4 |}{2}) \, +\, \delta.$
\end{definition}

\begin{remark}
This definition makes also sense for $\kap =0$, then the last inequality is just
$d(x_2,x_4) \le |\ov x_2 \ov x_4| + 2\delta$. Then the condition is the rough $\CAT(0)$ condition of
\cite{BuF}. In general, if one replaces the last condition ( also for $\kap < 0$) simply by
the condition $d(x_2,x_4) \le |\ov x_2 \ov x_4| + \delta$, then one obtains , what is called
rough $\CAT(\kap)$ in \cite{BuF}. For $\kap < 0$ this is equivalent to Gromov hyperbolicity
and brings no new information
\end{remark}

\begin{lemma}
 If $(X,d)$ is asymptotically $\CAT(\kap)$, then it is also asymptotically
$\PT_{\kap}$.
\end{lemma}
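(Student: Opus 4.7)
My strategy is to lift the inequality to the model plane $M^2_\kap$, apply the sharp $\PT_\kap$ inequality that $M^2_\kap$ satisfies, and then absorb the error coming from the $\CAT(\kap)$-comparison into the exponential error term allowed by the asymptotic $\PT_\kap$ definition.

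Given a quadruple $x_1,x_2,x_3,x_4\in X$, I would first invoke Definition \ref{def:ascat} to obtain comparison points $\ov x_1,\dots,\ov x_4\in M^2_\kap$. The four cyclic edges match the original distances exactly, while the two diagonals satisfy $\rho_{1,3}\le|\ov x_1\ov x_3|$ and $\skap(\rho_{2,4}/2)\le\skap(|\ov x_2\ov x_4|/2)+\delta$. Since $M^2_\kap$ is a $\CAT(\kap)$ space, by the proposition above it satisfies the exact $\PT_\kap$ inequality; applied to the comparison points this yields
\[
\skap(|\ov x_1\ov x_3|/2)\,\skap(|\ov x_2\ov x_4|/2)\ \le\ \skap(\rho_{1,2}/2)\,\skap(\rho_{3,4}/2)\ +\ \skap(\rho_{1,4}/2)\,\skap(\rho_{2,3}/2).
\]

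Next I would substitute back the two diagonal inequalities. Using that $\skap$ is monotone on $[0,\infty)$ (immediate since $\kap<0$), we get $\skap(\rho_{1,3}/2)\le\skap(|\ov x_1\ov x_3|/2)$ and, whenever $\skap(\rho_{2,4}/2)\ge\delta$, also $0\le\skap(\rho_{2,4}/2)-\delta\le\skap(|\ov x_2\ov x_4|/2)$. Multiplying these two lower bounds and plugging into the $\PT_\kap$ inequality above produces
\[
\skap(\rho_{1,3}/2)\,\skap(\rho_{2,4}/2)\ \le\ \skap(\rho_{1,2}/2)\,\skap(\rho_{3,4}/2)\ +\ \skap(\rho_{1,4}/2)\,\skap(\rho_{2,3}/2)\ +\ \delta\,\skap(\rho_{1,3}/2).
\]
Since $\sinh(t)\le e^t/2$, the residual obeys $\delta\,\skap(\rho_{1,3}/2)\le\frac{\delta}{2\sqrt{-\kap}}\,e^{\frac{\sqrt{-\kap}}{2}\rho_{1,3}}\le\frac{\delta}{2\sqrt{-\kap}}\,e^{\frac{\sqrt{-\kap}}{2}\rho}$, which is exactly the shape of the error term in the asymptotic $\PT_\kap$ condition (in its $\skap$-formulation, equivalent to the exponential one).

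The only subtlety is the degenerate case $\skap(\rho_{2,4}/2)<\delta$, in which the factor $\skap(\rho_{2,4}/2)-\delta$ is negative and the multiplication step is not legitimate; but then $\skap(\rho_{1,3}/2)\skap(\rho_{2,4}/2)\le\delta\,\skap(\rho_{1,3}/2)$, and the same exponential bound as above shows the left hand side is already of the required size, so the target inequality holds trivially. I do not expect any serious obstacle; essentially all the work is the one-line invocation of $\PT_\kap$ in $M^2_\kap$, and the only care needed is this case split and a routine tracking of constants to express the final error in the form $\delta'\,e^{\frac{\sqrt{-\kap}}{2}\rho}$.
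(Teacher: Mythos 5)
Your proposal is correct and follows essentially the same route as the paper: compare in $M^2_\kap$, apply the exact $\PT_\kap$ inequality there, and absorb the $\delta$-defect of the diagonal via $\sinh(t)\le e^t$ into the term $\delta'e^{\frac{\sqrt{-\kap}}{2}\rho}$. The paper's single chain of inequalities handles your ``degenerate case'' implicitly (a negative left-hand side is trivially below the nonnegative right-hand side), but your explicit case split is the same observation.
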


\begin{proof}
Let $X$ be asymptotically $\CAT(\kap)$ with constant $\delta$.
 Let $x_1,x_2,x_3,x_4 \in X$ be given and let 
$\ov x_1,\ov x_2,\ov x_3,\ov x_4 \in M^2_{\kap}$ be comparison points according to the asymptotic $\CAT(\kap)$
property. Let
$\rho_{i,j} = d(x_i,x_j)$, $\ov {\rho}_{i,j} = |\ov x_i \ov x_j|$ and $\rho= \max \rho_{i,j}$. Then
\begin{align*}
 \skap(\frac{\rho_{1,3}}{2})\,  \skap(\frac{\rho_{2,4}}{2}) \ & -\ \frac{\delta}{\sqrt{-\kap}} e^{\frac{\sqrt{-\kappa}}{2}\rho} \\
& \le \skap(\frac{\rho_{1,3}}{2}) (\skap(\frac{\rho_{2,4}}{2})  \,-\,\delta)  \\
& \le \skap(\frac{\ov{\rho}_{1,3}}{2}) \skap(\frac{\ov{\rho}_{2,4}}{2}) \\
& \le  \skap(\frac{\ov{\rho}_{1,2}}{2}) \skap(\frac{\ov{\rho}_{3,4}}{2}) \ +\ 
\skap(\frac{\ov{\rho}_{1,4}}{2}) \skap(\frac{\ov{\rho}_{2,3}}{2})  \\
& = \skap(\frac{\rho_{1,2}}{2})\, \skap(\frac{\rho_{3,4}}{2}) \ +\
\skap(\frac{\rho_{1,4}}{2})\, \skap(\frac{\rho_{2,3}}{2}).
\end{align*}
Thus $X$ is asymptotically $\PT_{\kap}$ with constant $\frac{\delta}{\sqrt{-\kap}}$.

\end{proof}

\subsection{Properties of asymptotically $\PT_{-1}$ spaces} \label{subsec:properties}

\begin{prop}
An asymptotic $PT_{-1}$ metric space is a Gromov hyperbolic space.
\end{prop}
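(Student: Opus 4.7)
The plan is to extract the Gromov hyperbolic four-point condition from the exponential asymptotic $\PT_{-1}$ inequality by combining it with a combinatorial/triangle-inequality lemma that controls the error term $\delta e^{\rho/2}$.

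Fix a quadruple $x_1,x_2,x_3,x_4 \in X$ with $\rho_{i,j}=d(x_i,x_j)$ and $\rho=\max \rho_{i,j}$, and introduce the three pair-sums
\[
 A=\tfrac12(\rho_{1,2}+\rho_{3,4}),\qquad B=\tfrac12(\rho_{1,4}+\rho_{2,3}),\qquad C=\tfrac12(\rho_{1,3}+\rho_{2,4}),
\]
so that the asymptotic $\PT_{-1}$ condition reads $e^{C}\le e^{A}+e^{B}+\delta\, e^{\rho/2}$, and the Gromov four-point condition to be proved reads $2C\le 2\max\{A,B\}+\delta'$ for a universal $\delta'$ (for every quadruple, hence for every permutation of labels).

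The first step is the key combinatorial observation: \emph{at least two of the numbers $A,B,C$ are $\ge\rho/2$}. Indeed, if $\rho=\rho_{i,j}$, then $\{i,j\}$ is one of the pairs appearing in exactly one of $A,B,C$, so that one is $\ge\rho/2$. For the remaining two pair-sums, the four distances involved are precisely the four $\rho_{i,k},\rho_{j,k},\rho_{i,l},\rho_{j,l}$ with $\{k,l\}=\{1,2,3,4\}\setminus\{i,j\}$; the two triangle inequalities $\rho_{i,k}+\rho_{j,k}\ge\rho$ and $\rho_{i,l}+\rho_{j,l}\ge\rho$ add to give that the remaining two pair-sums have total $\ge 2\rho$, so the larger of them is $\ge\rho/2$. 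Consequently the second-largest of $\{A,B,C\}$ is always $\ge\rho/2$.

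The second step uses this to absorb the error term. If $C\le\max\{A,B\}$ the Gromov inequality is trivial, so assume $C>\max\{A,B\}$; then $C$ is the maximum of $\{A,B,C\}$ and by the observation above $\max\{A,B\}\ge\rho/2$. Hence $e^{\rho/2}\le e^{\max\{A,B\}}$, and the asymptotic $\PT_{-1}$ inequality gives
\[
 e^{C}\ \le\ 2e^{\max\{A,B\}}+\delta\, e^{\max\{A,B\}}\ =\ (2+\delta)\,e^{\max\{A,B\}},
\]
so $C\le\max\{A,B\}+\log(2+\delta)$, i.e.\ $\rho_{1,3}+\rho_{2,4}\le\max\{\rho_{1,2}+\rho_{3,4},\,\rho_{1,4}+\rho_{2,3}\}+2\log(2+\delta)$. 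Since the argument works for an arbitrary quadruple, the Gromov four-point condition holds with constant $\delta'=2\log(2+\delta)$, and $X$ is Gromov hyperbolic.

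The only genuinely non-trivial point is the combinatorial lemma in the first step; without the lower bound $\max\{A,B\}\ge\rho/2$ one could not absorb $\delta e^{\rho/2}$ into a multiple of $e^{\max\{A,B\}}$, and the bound $e^{C}\le 2e^{\max\{A,B\}}+\delta e^{\rho/2}$ alone would be useless for large $\delta$. Once that lemma is in hand everything else is just rearrangement of exponentials.
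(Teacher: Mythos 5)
Your proof is correct and follows essentially the same route as the paper: the paper's key step is the triangle-inequality bound $\rho \le \max\{\rho_{1,2}+\rho_{3,4},\,\rho_{1,4}+\rho_{2,3}\}$, which is exactly the content of your combinatorial lemma (proved by summing the same two triangle inequalities), after which the error term $\delta e^{\rho/2}$ is absorbed into $e^{\max\{A,B\}}$ just as you do. Your case split on whether $C\le\max\{A,B\}$ is a minor presentational variation, not a different argument.
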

\begin{proof}
The asymptotic $PT_{-1}$ inequality is
\[
  e^{\frac{1}{2}(\rho_{1,3}+\rho_{2,4})}\leq e^{\frac{1}{2}(\rho_{1,2}+\rho_{3,4})}+e^{\frac{1}{2}(\rho_{1,4}+\rho_{2,3})}+\delta e^{\frac{1}{2}\rho}
\]
Using the triangle inequality, we see
\[
\rho \leq \max\{\rho_{1,2}+\rho_{3,4},\rho_{1,4}+\rho_{2,3}\}
\]
which then implies
\[
  e^{\frac{1}{2}(\rho_{1,3}+\rho_{2,4})}\leq (\delta +1)(e^{\frac{1}{2}(\rho_{1,2}+\rho_{3,4})}+e^{\frac{1}{2}(\rho_{1,4}+\rho_{2,3})})
\]
and hence
\[
 \rho_{1,3}+\rho_{2,4} \leq \max\{\rho_{1,2}+\rho_{3,4},\rho_{1,4}+\rho_{2,3}\} + \delta'.
\]
Thus  $X$ is a Gromov hyperbolic space.

\end{proof}

\begin{lemma}
Let $X$ be an asymptotic $\PT_{-1}$ space. Let $(x_i)$ , $(x'_i)$ and $(y_i)$ be sequences in $X$ satisfying 
\[
  \lim_{i\to \infty}(x_i|x'_i)_o=\infty, \;\;\lim_{i\to \infty}(x_i|y_i)_o=a,\;\;o\in X.
\]
Then
\[
  \lim_{i\to \infty}(x'_i|y_i)_o=a
\]
\end{lemma}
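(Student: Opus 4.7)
The plan is to apply the asymptotic $\PT_{-1}$ inequality to the quadruple $(x_1,x_2,x_3,x_4)=(o,x_i,x'_i,y_i)$ twice, under two different labelings, and read off upper and lower bounds on $(x'_i|y_i)_o$ in terms of $(x_i|y_i)_o$ and $(x_i|x'_i)_o$.

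Concretely, set $S_i=\tfrac{1}{2}(|ox_i|+|ox'_i|+|oy_i|)$ and use the identity $d(u,v)=|ou|+|ov|-2(u|v)_o$ to rewrite each of the three pair-sums $\rho_{j,k}+\rho_{l,m}$ appearing in the $\PT_{-1}$ inequality as $2S_i-2(u|v)_o$ for the corresponding $u,v$. Dividing the inequality through by $e^{S_i}$ yields
\[
e^{-(x_i|y_i)_o}\;\le\;e^{-(x'_i|y_i)_o}\;+\;e^{-(x_i|x'_i)_o}\;+\;\delta\,e^{\rho_i/2-S_i},
\]
where $\rho_i=\max_{j,k}\rho_{j,k}$; the relabeling obtained by swapping $x_i$ and $x'_i$ gives the same inequality with $(x_i|y_i)_o$ and $(x'_i|y_i)_o$ interchanged. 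Once the error term $\delta e^{\rho_i/2-S_i}$ is shown to tend to $0$, these two inequalities combine with $e^{-(x_i|y_i)_o}\to e^{-a}$ and $e^{-(x_i|x'_i)_o}\to 0$ to yield $\limsup(x'_i|y_i)_o\le a$ and $\liminf(x'_i|y_i)_o\ge a$, and we are done.

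The main obstacle is controlling the error term, since $|oy_i|$ is not assumed to diverge. From the hypothesis $(x_i|x'_i)_o\to\infty$ and the elementary bound $(x_i|x'_i)_o\le\min(|ox_i|,|ox'_i|)$, both $|ox_i|$ and $|ox'_i|$ tend to infinity. One then case-splits on which pair realizes $\rho_i$: if $\rho_i=|ou|$ for some $u\in\{x_i,x'_i,y_i\}$, then $\rho_i/2-S_i$ equals $-\tfrac{1}{2}$ times the sum of the other two distances to $o$, which in every case contains $|ox_i|$ or $|ox'_i|$ as a summand and so tends to $-\infty$; if $\rho_i=d(u,v)$ for distinct non-basepoints $u,v$ with remaining non-basepoint $w$, then $\rho_i/2-S_i=-(u|v)_o-\tfrac{1}{2}|ow|$, and by the nonnegativity of Gromov products this again tends to $-\infty$, because either $|ow|\in\{|ox_i|,|ox'_i|\}$ or else $(u|v)_o=(x_i|x'_i)_o\to\infty$. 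This justifies the limit passage and completes the proof.
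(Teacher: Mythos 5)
Your proposal is correct and follows essentially the same route as the paper: apply the asymptotic $\PT_{-1}$ inequality to the quadruple $o,x_i,x'_i,y_i$ (in two labelings) to trap $e^{-(x'_i|y_i)_o}$ between $e^{-(x_i|y_i)_o}\pm\bigl(e^{-(x_i|x'_i)_o}+E_i\bigr)$ after dividing by $e^{S_i}$. Your case analysis for the error term is just an unpacked version of the paper's one-line bound $|ox_i|+|ox'_i|+|oy_i|-\rho_i\ge\min\{|ox_i|,|ox'_i|,2(x_i|x'_i)_o\}$, so the two arguments coincide in substance.
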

\begin{proof}
From the asymptotic $PT_{-1}$ inequality, we obtain
\begin{eqnarray*}
e^{\frac{1}{2}(|x'_iy_i|+|ox_i|)}-e^{\frac{1}{2}(|oy_i|+|x_ix'_i|)}- \delta e^{\frac{1}{2}\rho_i} &\leq& e^{\frac{1}{2}(|ox'_i|+|x_iy_i|)}\\
 &\leq& e^{\frac{1}{2}(|oy_i|+|x_ix'_i|)}+e^{\frac{1}{2}(|x'_iy_i|+|ox_i|)}+ \delta e^{\frac{1}{2}\rho_i},
\end{eqnarray*}
where $\rho_i=\max\{|ox_i|,|ox'_i|,|oy|,|x_ix'_i|,|x_iy_i|,|x'_iy_i|\}$.

\noindent Dividing both sides by $e^{\frac{1}{2}(|ox_i|+|ox'_i|+|oy_i|)}$, we obtain
\[
  e^{-(x'_i|y_i)_o}-e^{-(x_i|x'_i)_o}-E_i \leq e^{-(x_i|y_i)_o}
 \leq e^{-(x'_i|y_i)_o}+e^{-(x_i|x'_i)_o}+E_i,
\]
where
$E_i=\delta e^\frac{1}{2}(\rho_i -|ox_i|-|ox'_i|-|oy_i|)$.
Note that by triangle inequalities
$$|ox_i|+|ox'_i|+|oy_i|-\rho_i\ \ge \min\{|ox_i|,|ox'_i|,2(x_i|x'_i)_o\},$$
and hence $E_i \to 0$ by our assumptions.
Taking the limit, we obtain
\[
   \lim_{i\to \infty}(x'_i|y_i)_o=\lim_{i\to \infty}(x_i|y_i)_o=a.
\]
\end{proof}

As an immediate consequence we get

\begin{cor}
 An asymptotic $\PT_{-1}$ space is boundary continuous.
\end{cor}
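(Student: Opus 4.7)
The plan is to extract the corollary directly from the preceding lemma. Boundary continuity asks that whenever $(x_i)\to\ov x$ and $(y_i)\to\ov y$ (where $\ov x,\ov y\in X\cup\d_{\infty}X$), the Gromov products $(x_i|y_i)_o$ converge to $(\ov x|\ov y)_o$. The lemma is exactly the ``representative independence'' fact one needs: replacing a sequence $(x_i)$ by an equivalent sequence $(x'_i)$ (with $(x_i|x'_i)_o\to\infty$) does not change the limit of the Gromov products against a third sequence. So the argument will have two ingredients: first, the limit along any fixed representing pair exists; second, it does not depend on the choice of representatives.

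The substantive case is $\ov x,\ov y\in\d_{\infty}X$ with $\ov x\neq\ov y$ (when $\ov x=\ov y$ the claim $(x_i|y_i)_o\to\infty$ is part of the definition of equivalence, and when both are interior points it is continuity of the metric). Fix representing sequences and consider two subsequences along which $(x_{n_k}|y_{n_k})_o\to a$ and $(x_{m_k}|y_{m_k})_o\to b$. Since $(x_i)$ converges at infinity to $\ov x$, we have $(x_{n_k}|x_{m_k})_o\to\infty$, so the lemma applied with triples $(x_{n_k}, x_{m_k}, y_{n_k})$ yields $(x_{m_k}|y_{n_k})_o\to a$. Similarly $(y_{n_k}|y_{m_k})_o\to\infty$, so a second application with triples $(y_{n_k}, y_{m_k}, x_{m_k})$ gives $(x_{m_k}|y_{m_k})_o\to a$, forcing $b=a$. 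Hence all subsequential limits coincide and the sequence $(x_i|y_i)_o$ converges.

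For independence of representatives, given a second pair $(x'_i)\to\ov x$, $(y'_i)\to\ov y$, one applies the lemma twice in the same fashion (once swapping $x$-representatives, once swapping $y$-representatives) to conclude that $(x'_i|y'_i)_o$ has the same limit. This common limit then agrees with the infimum-of-liminfs in (\ref{eq:gpbd}), so it equals $(\ov x|\ov y)_o$. The mixed case $\ov x\in X$, $\ov y\in\d_{\infty}X$ is handled by the same template, noting that $(x_i|y_i)_o-(\ov x|y_i)_o\to 0$ by continuity of the distance, which reduces the problem to varying only the sequence converging to infinity; the lemma with the constant sequence $y_i\equiv\ov x$ then takes care of independence.

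I do not expect a major obstacle here: the lemma has done the analytic work, and what remains is a standard subsequence/interleaving argument. The only bookkeeping point to be careful about is the case distinction according to where $\ov x$ and $\ov y$ live, and ensuring that the hypothesis $(x_i|x'_i)_o\to\infty$ of the lemma is genuinely available in each application (which is why the argument bifurcates into the boundary and interior cases).
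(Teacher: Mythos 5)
Your proposal is correct and follows exactly the route the paper intends: the paper states this corollary as an ``immediate consequence'' of the preceding lemma and supplies no further argument, and your subsequence/interleaving derivation is the standard way to fill in that step. Both applications of the lemma are legitimately instantiated (the hypotheses $(x_{n_k}|x_{m_k})_o\to\infty$ and $(y_{n_k}|y_{m_k})_o\to\infty$ do hold for subsequences of sequences converging at infinity), and the case distinctions are handled correctly.
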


\begin{theorem}\label{rPT}
Let $X$ be an asymptotic $PT_{-1}$ metric space and $o \in X$, then
\[
  \rho_o(x,y)=e^{-(x|y)_o},\,\,\, x,y \in \partial_{\infty}X
\]
is a metric on $\partial_{\infty}X$ which is $\PT_0$.
\end{theorem}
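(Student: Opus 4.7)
The plan is to derive both the metric axioms and the $\PT_0$ property for $\rho_o$ by applying the asymptotic $\PT_{-1}$ inequality to approximating sequences and passing to the limit, using the boundary continuity established in the corollary just above.

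First I would settle the easy points. Since $X$ is boundary continuous, $(x|y)_o$ is a well-defined real number for all $x,y \in \partial_\infty X$ (independent of choice of sequences), so $\rho_o(x,y) = e^{-(x|y)_o}$ is well defined. Positivity and symmetry are immediate from the corresponding properties of the Gromov product. For definiteness, $\rho_o(x,y) = 0$ iff $(x|y)_o = \infty$, which by the equivalence-class definition of $\partial_\infty X$ is precisely $x = y$.

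For the Ptolemy ($\PT_0$) inequality, pick distinct $\xi_1,\xi_2,\xi_3,\xi_4 \in \partial_\infty X$ and sequences $x_i^n \to \xi_i$, writing $d_i^n = |ox_i^n|$. Apply the asymptotic $\PT_{-1}$ inequality and divide through by $e^{\frac{1}{2}\sum_i d_i^n}$. Using $\rho_{i,j}^n = d_i^n + d_j^n - 2(x_i^n|x_j^n)_o$, the main terms become factors of the form $e^{-(x_i^n|x_j^n)_o - (x_k^n|x_l^n)_o}$, while the error becomes
\[
E^n \;=\; \delta\, e^{\frac{1}{2}(\rho^n - \sum_i d_i^n)}.
\]
Assume without loss of generality $d_1^n \ge d_2^n \ge d_3^n \ge d_4^n$. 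Since all $\xi_i$ are distinct, boundary continuity bounds each $(x_i^n|x_j^n)_o$, so $\rho^n \le d_1^n + d_2^n + O(1)$, whence $\rho^n - \sum_i d_i^n \le -(d_3^n + d_4^n) + O(1) \to -\infty$. Thus $E^n \to 0$, and passing to the limit via boundary continuity yields
\[
\rho_o(\xi_1,\xi_3)\,\rho_o(\xi_2,\xi_4) \;\le\; \rho_o(\xi_1,\xi_2)\,\rho_o(\xi_3,\xi_4) \;+\; \rho_o(\xi_1,\xi_4)\,\rho_o(\xi_2,\xi_3),
\]
with degenerate cases (coincident $\xi_i$) handled trivially.

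The triangle inequality comes from the same device, this time with the fourth point taken to be the basepoint itself: $x_4^n := o$. Since $(x|o)_o = 0$ for every $x$, three of the exponentials after normalization collapse to $1$, and the asymptotic $\PT_{-1}$ inequality reduces to
\[
e^{-(x_1^n|x_3^n)_o} \;\le\; e^{-(x_1^n|x_2^n)_o} \;+\; e^{-(x_2^n|x_3^n)_o} \;+\; \delta E^n,
\]
with the same error analysis showing $E^n \to 0$. The limit is precisely $\rho_o(\xi_1,\xi_3) \le \rho_o(\xi_1,\xi_2) + \rho_o(\xi_2,\xi_3)$.

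The main obstacle is showing $E^n \to 0$. The naive bound $\rho^n \le 2\max_i d_i^n$ would not suffice if the $d_i^n$ grew at very different rates. What rescues the argument is the finer estimate $\rho^n \le d_1^n + d_2^n + O(1)$, which relies crucially on $(x_i^n|x_j^n)_o$ being bounded for distinct limit points, itself a consequence of boundary continuity. Once this estimate is in hand, both inequalities drop out of the same one-line limit argument.
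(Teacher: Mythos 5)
Your proposal is correct and follows essentially the same route as the paper: apply the asymptotic $\PT_{-1}$ inequality to approximating sequences (taking the basepoint $o$ itself as the fourth point of the quadruple for the triangle inequality), normalize by $e^{\frac{1}{2}\sum_i |ox_i^n|}$, and pass to the limit via boundary continuity after checking that the error term vanishes. One small remark: the estimate $\rho^n \le d_1^n + d_2^n$ that you present as the crux requires neither boundary continuity nor distinctness of the $\xi_i$, since $\rho^n_{i,j} = |x_i^n x_j^n| \le d_i^n + d_j^n$ is just the triangle inequality, so the error analysis is even more elementary than you suggest.
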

\begin{proof}
First, we show that $\rho_o$ is a metric on $\partial_{\infty}X$. 
For given three points $x,y,z \in \partial_{\infty}X$, choose sequences $(x_i) \in x, (y_i)\in y, (z_i) \in z$.
By boundary continuity we have $(x|z)_o=\lim_{i \to \infty} (x_i|z_i)_o$. Then
\[
  e^{-(x|z)_o}=\lim_{i \to \infty}  e^{\frac{1}{2}(|x_iz_i|-|x_io|-|z_io|)}=\lim_{i\to \infty} e^{-\frac{1}{2}(|x_io|+|y_io|+|z_io|)}e^{\frac{1}{2}(|x_iz_i|+|oy_i|)}
\]
From the asymptotic $PT_{-1}$ inequality, we have
\begin{eqnarray*}
e^{\frac{1}{2}(|x_iz_i|+|oy_i|)}&\leq& e^{\frac{1}{2}(|y_iz_i|+|ox_i|)}+e^{\frac{1}{2}(|x_iy_i|+|oz_i|)}+\delta e^{\frac{1}{2}\rho_i}
\end{eqnarray*}
where $\rho_i=\max\{|ox_i|,|oy_i|,|oz_i|,|x_iy_i|,|x_iz_i|,|y_iz_i|\}$.
Thus
\[
  e^{-(x|z)_o}\leq \lim_{i \to \infty} e^{\frac{1}{2}(|x_iy_i|-|ox_i|-|oy_i|)}
+\lim_{i \to \infty} e^{\frac{1}{2}(|y_iz_i|-|oy_i|-|oz_i|)} + \lim_{i\to\infty} E_i,
\]
where $E_i=\delta e^{\frac{1}{2}(\rho_i-|ox_i|-|oy_i|-|oz_i|)}$. Again we easily check that $E_i\to 0$ and we obtain in the limit
the triangle inequality for $\rho_o$.

We use the similar argument to show that $\rho_o$ satisfies the ptolemaic inequality i.e.
\[
  e^{-(x|z)_o-(y|w)_o}\leq e^{-(x|y)_o-(z|w)_o}+e^{-(y|z)_o-(x|w)_o}.
\]
Choose sequences $(x_i)\in x, (y_i) \in y, (z_i)\in z, (w_i)\in w$.
Since we have
\begin{eqnarray*}
  e^{-(x_i|z_i)_o-(y_i|w_i)_o}&=&e^{-\frac{1}{2}(|x_io|+|y_io|+|z_io|+|w_io|)}e^{\frac{1}{2}(|x_iz_i|+|y_iw_i|)}\\
  &\leq& e^{-\frac{1}{2}(|x_io|+|y_io|+|z_io|+|w_io|)}(e^{\frac{1}{2}(|x_iy_i|+|z_iw_i|)}\\
  &&+e^{\frac{1}{2}(|y_iz_i|+|x_iw_i|)} +\delta e^{\frac{1}{2}\rho_i}) \\
  & = & e^{-(x_i|y_i)_o-(z_i|w_i)_o} + e^{-(y_i|z_i)_o-(x_i|w_i)_o} +E_i,
\end{eqnarray*}
where $\rho_i=\max\{|x_iy_i|,|x_xz_i|,|x_iw_i|,|y_iz_i|,|y_iw_i|,|z_iw_i|\}$ and
$$E_i=\delta e^{\frac{1}{2}(\rho_i-|x_io|-|y_io|-|z_io|-|w_io|)}.$$
Again we see that $E_i\to 0$ and we obtain in the limit the desired ptolemaic inequality.

\end{proof}

\begin{remark}
 The above result implies in particular that the asymptotic upper curvature
bound (see \cite{BF}) of an asymptotic $\PT_{\kappa}$ space is bounded above by
$\kap$.
\end{remark}


\section{Hyperbolic cones over M\"obius spaces} \label{sec:hypcone}

In this chapter we prove Theorem \ref{thm:2}.
Therefore we give (bases on \cite{BoS}) a construction, how to associate to a ptolemaic
M\"obius space $(Z,\cM)$ a hyperbolic space $X$ (which turns out to be asymptotically
$\PT_{-1}$), such that $(Z,\cM)$ is the canonical M\"obius structure of $\d_{\infty}X$.

Let $(Z,\cM)$ be a complete ptolemaic M\"obius space.
We choose a point $\om \in Z$ and an extended metric $d \in \cM$ from the M\"obius structure,
such that $\{\om\}=\Om(d)$ is the point at infinity. Such a metric exists by Theorem \ref{thm:chPT} and this
metric is unique (up to homothety) by Lemma \ref{lem:homothety}.

We take now the metric space $(Z_{\om},d)$, where $Z_{\om}=Z\sm \{\om\}$
and apply the cone construction of \cite{BoS} to it.
The space $\Con(Z_{\om})$ has properties analogous to the 
hyperbolic convex hull of a set in the boundary of a real hyperbolic space.
Set
\[
  \Con(Z_{\om})=Z_{\om}\times (0,\infty).
\]

Define $\rho: \Con(Z_{\om}) \times \Con(Z_{\om}) \to [0.\infty)$ by
\begin{equation} \label{eq:mcone}
  \rho((z,h),(z',h'))=2\log(\frac{d(z,z')+h \vee h'}{\sqrt{hh'}}).
\end{equation}

It turns out that $\rho$ satisfies the triangle inequality and is thus a metric, see \cite{BoS}.
We write $|zz'|=d(z,z')$ for $z,z'\in Z_{\om}$.

\begin{prop} \label{pr:mcone}
 $(\Con(Z_{\om}),\rho)$ is asymptotically $PT_{-1}$.
\end{prop}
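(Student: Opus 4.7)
The plan starts from the explicit formula $e^{\frac{1}{2}\rho((z,h),(z',h'))}=(d(z,z')+h\vee h')/\sqrt{hh'}$ read off from (\ref{eq:mcone}). Writing $d_{ij}=d(z_i,z_j)$, $m_{ij}=h_i\vee h_j$, $D_{ij}=d_{ij}+m_{ij}$, and $E=\sqrt{h_1h_2h_3h_4}\,e^{\frac{1}{2}\rho}$, multiplying the asymptotic $\PT_{-1}$ inequality through by $\sqrt{h_1h_2h_3h_4}$ reduces it to
\[D_{13}D_{24}\ \le\ D_{12}D_{34}+D_{14}D_{23}+\de\,E.\]
Expanding each $D_{ij}D_{kl}=d_{ij}d_{kl}+d_{ij}m_{kl}+m_{ij}d_{kl}+m_{ij}m_{kl}$ splits the problem into three independent sub-inequalities graded by homogeneity in $d$ and $m$.

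The pure-distance sub-inequality $d_{13}d_{24}\le d_{12}d_{34}+d_{14}d_{23}$ is precisely the Ptolemy inequality for $(Z_\om,d)\in\cM$, which holds because $\cM$ is ptolemaic. The pure-height sub-inequality $(h_1\vee h_3)(h_2\vee h_4)\le (h_1\vee h_2)(h_3\vee h_4)+(h_1\vee h_4)(h_2\vee h_3)$ is elementary: if $h_4=\max_i h_i$, both sides carry the factor $h_4$ and the claim collapses to $h_1\vee h_3\le (h_1\vee h_2)+(h_2\vee h_3)$, which is immediate from case analysis on the sign of $h_1-h_3$.

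The substantive step is the cross-term estimate
\[d_{13}m_{24}+m_{13}d_{24}\ \le\ d_{12}m_{34}+m_{12}d_{34}+d_{14}m_{23}+m_{14}d_{23}+\de\,E.\]
The asymptotic $\PT_{-1}$ inequality is invariant under the dihedral group $D_4\subset S_4$ stabilizing the pairing $\{\{1,3\},\{2,4\}\}$, and $D_4$ acts with three orbits on orderings of $(h_1,h_2,h_3,h_4)$. In each orbit representative I would apply the triangle inequality $d_{13}\le d_{1k}+d_{k3}$ and $d_{24}\le d_{2l}+d_{l4}$ for suitable $k,l$, and use the monotonicity of $h\vee h'$ in its entries, to match each LHS summand with one of the six RHS cross-terms. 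The leftovers take the form $d_{ij}h_k$ for some triple $\{i,j,k\}$, and the key observation is that from $E\ge D_{ij}\sqrt{h_kh_l}\ge d_{ij}\sqrt{h_kh_l}$ (where $l$ is the missing fourth index) one gets the universal bound $d_{ij}h_k\le E\sqrt{h_k/h_l}$. The orbit normal forms can be chosen so that the residual index always satisfies $h_k\le h_l$, whence $d_{ij}h_k\le E$ and a single universal constant (in fact $\de=1$) suffices. The main obstacle is exactly this coordination: for each $D_4$-orbit one must select the triangle-inequality decomposition of $d_{13}$ and $d_{24}$ so that every residual $d_{ij}h_k$ lands on the ``small side'' of the height ordering and is thus absorbed by $E$.
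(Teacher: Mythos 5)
Your reduction is exactly the paper's: after multiplying through by $\sqrt{h_1h_2h_3h_4}$ the paper also works with $D_{ij}=d_{ij}+h_i\vee h_j$, kills the pure-distance part with the Ptolemy inequality on $(Z_\om,d)$ and the pure-height part with the identity $(a\vee b)(c\vee d)=ac\vee ad\vee bc\vee bd$, so the only real divergence is the cross-term estimate. There the paper avoids your case analysis entirely: it pays $4E$ up front, notes $E\ge d_{ij}\sqrt{h_kh_l}$ for each of the four good pairs, and uses $(h_k\vee h_l)+\sqrt{h_kh_l}\ge h_k+h_l$ to upgrade each good coefficient to $m_{kl}d_{ij}+E\ge (h_k+h_l)d_{ij}$; then the single symmetric splitting $m_{24}d_{13}\le h_2(d_{14}+d_{43})+h_4(d_{12}+d_{23})$ and likewise for $m_{13}d_{24}$ matches everything term by term with no reference to the ordering of the heights, giving $\de=4$. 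Your route keeps the coefficients as maxima, matches greedily, and absorbs the residue; I checked the three $D_4$-orbit representatives (with $h_1$ maximal: $h_2\ge h_3\ge h_4$, $h_2\ge h_4\ge h_3$, $h_3\ge h_2\ge h_4$), and the uniform choice $d_{13}\le d_{14}+d_{43}$, $d_{24}\le d_{23}+d_{34}$ does leave exactly one residual term $h_2d_{34}$ with $h_2\le h_1$, so your absorption bound $d_{ij}h_k\le E\sqrt{h_k/h_l}\le E$ applies and your constant $\de=1$ is correct and sharper than the paper's. Two caveats: the coordination you describe is asserted rather than carried out, and it genuinely matters --- e.g.\ the alternative splitting $d_{13}\le d_{12}+d_{23}$ in the orbit $h_1\ge h_2\ge h_3\ge h_4$ leaves a residue $(h_2-h_3)d_{12}$ whose extra index lies \emph{inside} the pair, which your lemma does not control and which can exceed any multiple of $E$ when $h_3,h_4$ are small; so the case analysis is the part a referee would insist you write out. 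Also a small slip: there are four RHS cross-terms, not six.
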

\begin{proof}
Given arbitrary four points $x_i=(z_i,h_i) \in \Con((Z_{\om},d)), z_i \in (Z_{\om},d)$, $i=1,2,3,4$, we have
\[
  e^{\frac{\rho(x_i,x_j)}{2}}=\frac{|z_iz_j|+h_i \vee h_j}{\sqrt{h_ih_j}},\;\; i\neq j.
\]
i.e.
\begin{equation} \label{eq:ib}
  |z_iz_j|=\sqrt{h_ih_j} e^{\frac{\rho(x_i,x_j)}{2}}-h_i \vee h_j,\;\; i\neq j.
\end{equation}
Since $(Z,\cM)$ is a complete ptolemaic M\"obius space, $(Z_{\om},d)$ is a complete metric space which satisfies the $PT_0$ inequality, 
hence we obtain
\[
  |z_1z_2|\,|z_3z_4|\ +\ |z_1z_4|\,|z_2z_3|\  \geq\  |z_1z_3|\,|z_2z_4|.
\]
Replacing  $|z_iz_j|$ by (\ref{eq:ib}), we have the following inequality
\begin{multline*}
(\sqrt{h_1h_2} e^{\frac{\rho(x_1,x_2)}{2}}-h_1 \vee h_2)(\sqrt{h_3h_4} e^{\frac{\rho(x_3,x_4)}{2}}-h_3 \vee h_4)\\
+(\sqrt{h_1h_4} e^{\frac{\rho(x_1,x_4)}{2}}-h_1 \vee h_4)(\sqrt{h_2h_3} e^{\frac{\rho(x_2,x_3)}{2}}-h_2 \vee h_3)\\
\geq (\sqrt{h_1h_3} e^{\frac{\rho(x_1,x_3)}{2}}-h_1 \vee h_3)(\sqrt{h_2h_4} e^{\frac{\rho(x_2,x_4)}{2}}-h_2 \vee h_4).
\end{multline*}
This can be written as
\begin{multline*}
\sqrt{h_1h_2h_3h_4}(e^{\frac{\rho(x_1,x_2)}{2}+\frac{\rho(x_3,x_4)}{2}}+e^{\frac{\rho(x_1,x_4)}{2}+\frac{\rho(x_2,x_3)}{2}}-e^{\frac{\rho(x_1,x_3)}{2}+\frac{\rho(x_2,x_4)}{2}})\\
-\sqrt{h_1h_2}(h_3 \vee h_4)e^{\frac{\rho(x_1,x_2)}{2}}-\sqrt{h_3h_4}(h_1 \vee h_2)e^{\frac{\rho(x_3,x_4)}{2}}-\sqrt{h_1h_4}(h_2 \vee h_3)e^{\frac{\rho(x_1,x_4)}{2}}\\
-\sqrt{h_2h_3}(h_1 \vee h_4)e^{\frac{\rho(x_2,x_3)}{2}}+\sqrt{h_1h_3}(h_2 \vee h_4)e^{\frac{\rho(x_1,x_3)}{2}}+\sqrt{h_2h_4}(h_1 \vee h_3)e^{\frac{\rho(x_2,x_4)}{2}}\\
+(h_1 \vee h_2)(h_3 \vee h_4)+(h_1 \vee h_4)(h_2 \vee h_3)-(h_1 \vee h_3)(h_2 \vee h_4) \geq 0.
\end{multline*}
Using again (\ref{eq:ib}) we obtain
\begin{multline} \label{eq:ib1}
e^{\frac{\rho(x_1,x_2)}{2}+\frac{\rho(x_3,x_4)}{2}}+e^{\frac{\rho(x_1,x_4)}{2}+\frac{\rho(x_2,x_3)}{2}}-e^{\frac{\rho(x_1,x_3)}{2}+\frac{\rho(x_2,x_4)}{2}}\\
\geq \frac{(h_3 \vee h_4)|z_1z_2|+(h_1 \vee h_2)|z_3z_4|+(h_2 \vee h_3)|z_1z_4|+(h_1 \vee h_4)|z_2z_3|}{\sqrt{h_1h_2h_3h_4}}\\
-\frac{(h_2 \vee h_4)|z_1z_3|+(h_1 \vee h_3)|z_2z_4|}{\sqrt{h_1h_2h_3h_4}}\\
+\frac{(h_1 \vee h_2)(h_3 \vee h_4)+(h_1 \vee h_4)(h_2 \vee h_3)-(h_1 \vee h_3)(h_2 \vee h_4)}{\sqrt{h_1h_2h_3h_4}}
\end{multline}
Since $(a \vee b)(c \vee d)=ac \vee ad \vee bc \vee bd, a,b,c,d \in \mathbb{R}$, we easily obtain that
\[
 (h_1 \vee h_2)(h_3 \vee h_4)+(h_1 \vee h_4)(h_2 \vee h_3) \geq (h_1 \vee h_3)(h_2 \vee h_4)
\]
which shows that the last term in (\ref{eq:ib1}) is nonnegative and can be omitted.
We use below that
\[
  (h_i\vee h_j)+\sqrt{h_ih_j}\geq h_i+h_j
\]
for all $h_i,h_j \geq 0$.\\
Let $\rho=\max_{i,j}\rho_{i,j}$. Then again by (\ref{eq:ib}) $|z_iz_j|\le\sqrt{h_ih_j}e^{\frac{1}{2}\rho}$
and thus
\begin{multline*}
e^{\frac{\rho(x_1,x_2)}{2}+\frac{\rho(x_3,x_4)}{2}}+e^{\frac{\rho(x_1,x_4)}{2}+\frac{\rho(x_2,x_3)}{2}}-e^{\frac{\rho(x_1,x_3)}{2}+\frac{\rho(x_2,x_4)}{2}}+4e^{\frac{1}{2}\rho}\\
\geq \frac{(h_3 \vee h_4)|z_1z_2|+(h_1 \vee h_2)|z_3z_4|+(h_2 \vee h_3)|z_1z_4|+(h_1 \vee h_4)|z_2z_3|}{\sqrt{h_1h_2h_3h_4}}\\
-\frac{(h_2 \vee h_4)|z_1z_3|+(h_1 \vee h_3)|z_2z_4|}{\sqrt{h_1h_2h_3h_4}}+\frac{|z_1z_2|}{\sqrt{h_1h_2}}+\frac{|z_3z_4|}{\sqrt{h_3h_4}}+\frac{|z_1z_4|}{\sqrt{h_1h_4}}+\frac{|z_2z_3|}{\sqrt{h_2h_3}}\\
\geq \frac{(h_3 + h_4)|z_1z_2|+(h_1 + h_2)|z_3z_4|+(h_2 + h_3)|z_1z_4|+(h_1 + h_4)|z_2z_3|}{\sqrt{h_1h_2h_3h_4}}\\
-\frac{(h_2 \vee h_4)|z_1z_3|+(h_1 \vee h_3)|z_2z_4|}{\sqrt{h_1h_2h_3h_4}}\geq 0
\end{multline*}

Therefore $X$ is asymptotic $PT_{-1}$.
\end{proof}

To finish the proof of Theorem \ref{thm:2}, we have to show that $\d_{\infty}X$ can be
canonically identified with $Z$.

We chose a base point $z_0 \in Z_{\om}$ and then $o:=(z_0,1)$ as base point of $X$.
We define for simplicity $|z|:=|zz_0|$.
For $x=(z,h)$ and $x'=(z',h')$ in $X$ we compute 
\begin{equation} \label{eq:Gp}
(x|x')_o\ =\ \log(\frac{(|z|+h\vee 1)(|z'|+ h'\vee 1)}{|zz'|+h\vee h'}).
 \end{equation}

\begin{lemma}
A sequence $x_i=(z_i,h_i)$ in $X$ converges at infinity, if and only if one of the following holds

1. $(z_i)$ is a Cauchy sequence in $Z_{\om}$ and $h_i\to0$.

2. $(|z_i|+h_i)\ \to\ \infty$.
 
\end{lemma}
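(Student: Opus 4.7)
The plan is to use the explicit formula (\ref{eq:Gp}) for the Gromov product, writing $a_k := |z_k| + h_k\vee 1$ so that
\[
(x_i|x_j)_o \;=\; \log\frac{a_i\, a_j}{|z_iz_j|+ h_i\vee h_j}.
\]
Convergence at infinity is thus equivalent to the ratio $R_{ij} := a_ia_j/(|z_iz_j|+h_i\vee h_j)$ tending to $\infty$ as $i,j\to\infty$, and I will verify both directions in these terms.

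For the two sufficient conditions: under condition 1, completeness of $(Z,\cM)$ (together with $\om$ being the remote point of $d$) gives that the Cauchy sequence $(z_i)$ converges in $Z_\om$, so $a_i$ stays bounded and bounded away from zero, while $|z_iz_j|+h_i\vee h_j\to 0$, forcing $R_{ij}\to\infty$. Under condition 2, the elementary inequality $h\vee 1\ge (1+h)/2$ gives $a_i\ge\tfrac12(|z_i|+h_i+1)\to\infty$, and on the other hand the denominator is bounded above by $|z_i|+|z_j|+h_i+h_j\le 2(a_i+a_j)$, so $R_{ij}\ge \min(a_i,a_j)/4\to\infty$.

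For the converse I argue contrapositively: assume neither condition holds. Failure of condition 2 means some subsequence has $|z_i|+h_i$ bounded, so both $|z_i|$, $h_i$ and hence $a_i$ are bounded on it. Two subcases arise. If $h_i\not\to 0$ along this subsequence, pass to a further subsequence with $h_i\ge c>0$; then $h_i\vee h_j\ge c$ provides a positive lower bound on the denominator, keeping $R_{ij}$ bounded. Otherwise $h_i\to 0$ on the subsequence, and since condition 1 fails this can only be because $(z_i)$ is not Cauchy, so indices $i_k,j_k$ can be chosen with $|z_{i_k}z_{j_k}|\ge c>0$; again $R_{i_kj_k}$ stays bounded. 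In either subcase $(x_i)$ fails to converge at infinity.

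The only delicate point is the nested extraction of subsequences in the contrapositive, since one bad pair $(i_k,j_k)$ is already enough to block convergence at infinity. All analytic content reduces to the estimate $h\vee 1\ge(1+h)/2$ and the trivial upper bound of the denominator by $2(a_i+a_j)$; everything else is direct substitution into (\ref{eq:Gp}).
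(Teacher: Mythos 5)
Your \emph{if} direction is correct and is essentially the paper's argument (your bound $|z_iz_j|+h_i\vee h_j\le 2(a_i+a_j)$ is the paper's $M_{i,j}\ge\frac14\bigl(|z_iz_j|+h_i\vee h_j\bigr)$ in disguise; the appeal to completeness in case 1 is superfluous, since $a_i\ge h_i\vee 1\ge 1$ automatically and Cauchy already gives boundedness). The gap is in the contrapositive. Conditions 1 and 2 are statements about the \emph{whole} sequence, but after extracting the bounded subsequence you treat the failure of condition 1 as if it were localized to that subsequence. Concretely, in your subcase (b) you have $h_i\to 0$ and $|z_i|$ bounded \emph{along the subsequence} and conclude that condition 1 ``can only fail because $(z_i)$ is not Cauchy''. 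That inference is false: for $x_{2i}=(z_0,1/i)$ and $x_{2i+1}=(z_0,i)$ the sequence $(z_i)$ is constant, condition 1 fails only because $h_i\not\to 0$ globally, condition 2 fails because the even terms are bounded, and your subcase (b) applies to the even subsequence but produces no bad pair --- even though $(x_{2i}|x_{2j+1})_o\equiv 0$, so the sequence indeed does not converge at infinity. A second, independent problem: even when $(z_i)$ genuinely fails to be Cauchy, your indices $i_k,j_k$ with $|z_{i_k}z_{j_k}|\ge c$ range over the full sequence, where $a_{i_k}a_{j_k}$ need not be bounded; taking $z_{i_k},z_{j_k}$ at mutual distance $1$ but both at distance $k$ from $z_0$ gives $R_{i_kj_k}\to\infty$, so a lower bound on the denominator alone does not keep $R_{i_kj_k}$ bounded.

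The missing step is exactly the first claim in the paper's proof: if $(x_i)$ converges at infinity, then $(|z_i|+h_i)$ cannot admit both a subsequence tending to $\infty$ and a subsequence bounded by $M$, because for such a cross pair one has $|z_{i_k}z_{i_l}|+h_{i_k}\vee h_{i_l}=\bigl(|z_{i_k}|+h_{i_k}\vee 1\bigr)+O(M)$, whence $(x_{i_k}|x_{i_l})_o$ stays bounded. With this dichotomy in hand, the failure of condition 2 (plus the assumed convergence at infinity, or in your contrapositive the absence of such cross pairs) forces $|z_i|+h_i$ to be bounded for the \emph{entire} sequence; then $a_ia_j$ is globally bounded, and your two subcases do work, since the failure of condition 1 can now be read off on the whole sequence: either $h_i\ge c$ along some subsequence, or $|z_iz_j|\ge c$ for pairs with globally bounded numerator. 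Without first ruling out the mixed behaviour, the proof is incomplete.
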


\begin{proof}
We show first the {\it if} implication:

Assume 1. that $(z_i)$ is a Cauchy sequence and $h_i\to 0$. Then equation (\ref{eq:Gp}) immediately implies
that $\lim_{i,j\to\infty}(z_i|z_j)_o =\infty$.

Assume 2. that $(|z_i|+h_i)\to \infty$. For given $i,j$ let
$$M_{i,j} =\max\{(|z_i|+h_i\vee 1),(|z_j|+h_j\vee 1)\},$$
$$m_{i,j} =\min\{(|z_i|+h_i\vee 1),(|z_j|+h_j\vee 1)\}.$$
One easily sees
$$M_{i,j}\,\ge\,\frac{1}{4}(|z_izj|+h_i\vee h_j )$$
thus
$$(x_i|x_j)_o\ =\ \log(\frac{m_{i,j}M_{i,j}}{|z_iz_j|+h_i\vee h_j})\ \ge\ \log(\frac{1}{4}m_{i,j})$$
and hence $\lim_{i,j\to\infty}(x_i|x_j)_o=\infty$.

For the {\it only if} part assume that we have given a sequence
$x_i=(z_i,h_i)$ with $\lim_{i,j\to\infty}(x_i|x_j)_o=\infty$.

We first show that there cannot 
exist two subsequences $(x_{i_k} )$ and $(x_{i_l} )$ of $(x_i)$, such that $|z_{i_k}|+h_{i_k} \to \infty$ for $k \to \infty$ and $|z_{i_l}|+h_{i_l}\leq M$ 
for all $l$. If to the contrary such sequences would exist, then we easily obtain using triangle inequalities that
\[
  |z_{i_k}|+h_{i_k}\vee 1 -2M -1 \leq |z_{i_k}z_{i_l}|+h_{i_k}\vee h_{i_l} \leq |z_{i_k}|+h_{i_k}\vee 1 +2M+1
\]
and hence $\limsup (x_{i_k}|x_{i_l})_o$ is finite, a contradiction.

Thus either $|z_i|+h_i \to \infty$ and we are in case 2 or $|z_i|+h_i$ is bounded.
The boundedness and $(x_i|x_j)_o\to, \infty$ implies $\log(|z_iz_j|+h_i\vee h_j )\to \infty$ and hence
$(z_i)$ is a Cauchy sequence and $h_i\to 0$.
\end{proof}

\begin{lemma}
 One can identify
$Z$ with $\d_{\infty}X$ in a canonical way.
\end{lemma}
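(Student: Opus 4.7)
The plan is to construct the identification $\Phi : Z \to \d_{\infty}X$ using the preceding lemma, which classifies sequences converging at infinity into two types. For $z \in Z_{\om}$, define $\Phi(z)$ to be the equivalence class of the vertical sequence $(z,1/i)$ (Case 1 of that lemma, since it is a constant Cauchy sequence in $Z_{\om}$ with $h_i \to 0$). For $\om \in Z$, define $\Phi(\om)$ to be the equivalence class of $(z_0, i)$ (Case 2, since $|z_0|+i \to \infty$). Showing $\Phi$ is a bijection reduces, via formula~(\ref{eq:Gp}), to a short list of Gromov product estimates.

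For injectivity I would compute $(x_i|x'_i)_o$ on pairs of representatives. If $z \ne z' \in Z_{\om}$, formula~(\ref{eq:Gp}) applied to $x_i = (z,1/i)$ and $x'_i = (z',1/i)$ yields $\log\frac{(|z|+1)(|z'|+1)}{|zz'|+1/i} \to \log\frac{(|z|+1)(|z'|+1)}{|zz'|}$, which is finite, so the two classes differ. Comparing a Case 1 sequence $(z,1/i)$ with the Case 2 representative $(z_0,i)$ via the same formula gives a Gromov product tending to $\log(|z|+1)$, showing $\Phi(z) \ne \Phi(\om)$.

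For surjectivity, let $(x_i) = (z_i, h_i)$ be any sequence converging at infinity. By the previous lemma it falls into Case 1 or Case 2. In Case 1, $(z_i)$ is Cauchy in $(Z_{\om}, d)$; completeness of the M\"obius structure produces a limit $z \in Z_{\om}$, and comparing $(x_i)$ with $(z, 1/i)$ via~(\ref{eq:Gp}) shows the Gromov product tends to infinity, so $[(x_i)] = \Phi(z)$. In Case 2 one must check that any two Case 2 sequences are mutually equivalent: writing $A_i := |z_i|+h_i\vee 1$ and $B_i := |z'_i|+h'_i\vee 1$, the triangle inequality gives $|z_iz'_i|+h_i\vee h'_i \le A_i+B_i$, and~(\ref{eq:Gp}) yields $(x_i|x'_i)_o \ge \log\frac{A_i B_i}{A_i+B_i} \ge \log\frac{\min(A_i,B_i)}{2}$, which tends to infinity since both $A_i, B_i \to \infty$. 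Hence $[(x_i)] = \Phi(\om)$.

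The main obstacle is the Case 2 comparison: one must see that the divergence $|z_i|+h_i \to \infty$ forces large Gromov products regardless of whether the horizontal coordinate $|z_i|$, the vertical coordinate $h_i$, or both dominate, and the elementary bound above handles this uniformly. Completeness of $(Z,\cM)$ enters exactly once, to extract the limit $z$ in Case 1. Finally, the identification is canonical in the sense that replacing $d$ by the only other freedom allowed by Lemma~\ref{lem:homothety}, a homothetic metric, rescales $\rho$ by a uniform factor on $\Con(Z_{\om})$ and therefore preserves equivalence classes of sequences at infinity; this already suggests the correct functorial framework for showing, in the remainder of Theorem~\ref{thm:2}, that $\Phi$ actually intertwines $\cM$ with the canonical boundary M\"obius structure on $\d_{\infty}X$.
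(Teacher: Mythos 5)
Your proof is correct and follows essentially the same route as the paper: the same map $z\mapsto[(z,1/i)]$, $\om\mapsto[(z_0,i)]$, injectivity read off from formula~(\ref{eq:Gp}), and surjectivity from the dichotomy of the preceding lemma together with completeness of $(Z,\cM)$ in Case 1. You in fact supply slightly more detail than the paper (the explicit bound $(x_i|x'_i)_o\ge\log\frac{A_iB_i}{A_i+B_i}$ showing all Case 2 sequences are mutually equivalent, which the paper only asserts), and the details check out.
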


\begin{proof}
 We define a map
$\chi:Z\to \d_{\infty}X$ by
$z\mapsto [(z,\frac{1}{i})]$ for $z\in Z_{\om}$ and
$\om\mapsto [(z_0,i)]$; here $[\ ]$ denotes the equivalence class
of the corresponding sequences. Formula (\ref{eq:Gp}) shows that this map is injective.
Let now $\xi \in \d_{\infty}X$ be given and be represented by
a sequence $x_i=(z_i,h_i)$.
If $|z_i|+h_i \to \infty$ then
$(x_i|(z_0,i))_o \to \infty$ and $\xi=\chi(\om)$.
If $h_i \to 0$ and $(z_i)$ a Cauchy sequence in $Z_{\om}$, then
the $z:=\lim z_i$ exists, since $(Z,\cM)$ is a complete M\"obius structure.
One easily checks $\xi=\chi(z)$.
\end{proof}

\begin{lemma}
 The canonical M\"obius structure of $\d_{\infty}X$ equals to $\cM$.
\end{lemma}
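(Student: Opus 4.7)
The plan is to show that the Bourdon metric $\rho_o$ on $\d_{\infty}X$ (built from the basepoint $o=(z_0,1)$), when transported to $Z$ via $\chi$ and subjected to the metric involution at $\chi(\om)$, reproduces exactly the chosen $d\in\cM$. Since metric involution stays inside a single M\"obius class (Theorem \ref{thm:chPT}), this will place $d$ inside the canonical M\"obius structure of $\d_{\infty}X$; as $\cM$ is the M\"obius class of $d$, the two structures must then coincide.

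First, I would evaluate $(\xi|\eta)_o$ for $\xi=\chi(z),\eta=\chi(z')$ with $z,z'\in Z_{\om}$. Using the explicit formula (\ref{eq:Gp}) with the representing sequences $x_i=(z,1/i)$ and $x_i'=(z',1/i)$, and noting that $h_i\vee 1 = 1$ and $h_i\vee h_i'= 1/i$ for large $i$, passage to the limit (justified by the boundary continuity of asymptotic $\PT_{-1}$ spaces proved in section \ref{subsec:properties}) gives
$$\rho_o(\xi,\eta)\ =\ \frac{|zz'|}{(|z|+1)(|z'|+1)}.$$
A parallel computation with $\eta=\chi(\om)$, represented by $(z_0,i)$, and the same formula (\ref{eq:Gp}), yields $\rho_o(\xi,\chi(\om)) = 1/(|z|+1)$.

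Substituting these two expressions into the involution formula of Theorem \ref{thm:chPT} at the remote point $\chi(\om)$ gives $(\rho_o)_{\chi(\om)}(\xi,\eta) = |zz'| = d(z,z')$. Thus, under the identification $\chi$, the metric $d$ is recovered as the unique (up to scaling, by Lemma \ref{lem:homothety}) element of the canonical M\"obius structure on $\d_{\infty}X$ whose point at infinity is $\chi(\om)$. Because $d\in\cM$ and also belongs to the canonical M\"obius structure, and both are equivalence classes of extended metrics on the same set, the two classes agree. The only delicate point is the limit step in the Gromov-product computations; once boundary continuity is invoked, and the mild case analysis $h_i\vee 1$ versus $h_i\vee h_i'$ is handled, no further obstacle arises.
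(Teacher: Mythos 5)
Your proposal is correct and follows essentially the same route as the paper: both compute $(\chi(z)|\chi(z'))_o$ and $(\chi(\om)|\chi(z))_o$ from formula (\ref{eq:Gp}) via the representing sequences, then apply the metric involution at $\chi(\om)$ to recover $\rho_{\om,o}(z,z')=|zz'|=d(z,z')$. The only cosmetic difference is that the paper phrases the computation in terms of the Gromov products $(z|z')_{\om,o}$ and their logarithms rather than the metrics themselves.
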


\begin{proof}
 We consider on $\d_{\infty}X$ the canonical M\"obius structure which is given by the metric $\rho_o(z,z')=e^{-(z|z')_o}$.
Using metric involution we consider the extended metric in the same M\"obius class with $\om$ as
infinitely remote point.
This metric is given for $z,z'\in Z_{\om}$ by
$$\rho_{\om,o}(z,z')\ =\ \frac{\rho_o(z,z')}{\rho_o(\om,z)\rho_o(\om,z')}\ =\ e^{-(z|z')_{\om,o}}.$$
Now
$$(z|z')_{\om,o}= (z|z')_o\,-\,(\om|z)_o\,-\,(\om|z')_o.$$
By formula (\ref{eq:Gp}) we have
$$(\om|z)_o=\lim_{i\to\infty}\log(\frac{i(|z_i|+1)}{|z_i|+i}) \ =\ \log(|z|+1)$$
and in the same way $(\om|z')_o=\log(|z'|+1)$.
Using formula (\ref{eq:Gp}) we see that for $z,z'\in Z_{\om}$
$$(z|z')_o\ =\ \log(\frac{(|z|+1)(|z'|+1)}{|zz'|}.$$
Now we easily compute
$$(z|z')_{\om,o}\ =\ -\log(|zz'|),$$
and hence 
$$\rho_{\om,o}(z,z')\ =\ |zz'|.$$

\end{proof}


\section{Proof of Theorem \ref{thm:3}} \label{sec:ap}

Our proof relies on the following result, which is a combination of
Proposition \ref{pr:mcone} and Theorem \cite[Theorem 8.2]{BoS}.
For the notion of a visual Gromov hyperbolic space we also refer to that paper or \cite{BS}.
Two space $(X,d_X)$ and $(Y,d_Y)$ are rough isometric, if there exists $f:X\to Y$ 
and a constant $K \ge 0$ such that
for all $x,y \in X$
$$|d_X(x,y)\,-\,d_Y(f(x),f(y) |\ \leq K$$
and in additional $\sup_{y\in Y} d_Y(y,f(X)) \le K$.

\begin{prop} \label{pr:bl}
 Assume that $X$ is a visual Gromov hyperbolic space such that
$e^{-(\cdot |\cdot)_o}$ is bilipschitz to a ptolemaic metric $d$ on $\d_{\infty}X$,
the $X$ is rough isometric to an asymptotically $\PT_{-1}$ space.
\end{prop}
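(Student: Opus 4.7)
The plan is to realize $X$ as rough isometric to the hyperbolic cone $Y:=\Con(Z,d)$ over $Z:=\di X$ equipped with the given ptolemaic metric $d$, and then to invoke Proposition~\ref{pr:mcone} to conclude that $Y$ is asymptotically $\PT_{-1}$. Here $\Con(Z,d)$ denotes $Z\times(0,\infty)$ with the metric (\ref{eq:mcone}); since the proof of Proposition~\ref{pr:mcone} uses only the $\PT_0$ inequality on the base together with the identity (\ref{eq:ib}), and nowhere invokes unboundedness of $(Z,d)$, it applies verbatim in the present bounded setting.

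First, I would verify that $(Z,d)$ is a complete ptolemaic metric space. The ptolemaic inequality is built into the hypothesis, while completeness of $(\di X,e^{-(\cdot|\cdot)_o})$ is standard for visual Gromov hyperbolic spaces and transfers to $(Z,d)$ via the bilipschitz equivalence. Thus Proposition~\ref{pr:mcone} applies and $Y$ is asymptotically $\PT_{-1}$.

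Second, I would appeal to \cite[Theorem 8.2]{BoS}, which asserts that a visual Gromov hyperbolic space $X$ is rough isometric to $\Con(\di X,\wt d)$ for any metric $\wt d$ on the boundary that is bilipschitz to $e^{-(\cdot|\cdot)_o}$ (i.e.\ a visual metric of parameter $1$). Our hypothesis provides such a metric, namely $d$ itself; taking $\wt d:=d$ yields the desired rough isometry $X\to Y$. If the cited statement is instead phrased with respect to a specific visual metric $\wt d\ne d$, one still concludes as follows: $\wt d$ and $d$ are $L$-bilipschitz (both being bilipschitz to $e^{-(\cdot|\cdot)_o}$), and the elementary estimate
\[
\bigl|\log(d(z,z')+h\vee h')-\log(\wt d(z,z')+h\vee h')\bigr|\ \le\ \log L
\]
shows that the identity map $\Con(Z,d)\to\Con(Z,\wt d)$ is a rough isometry with additive constant $2\log L$, so the two cones are rough isometric and the conclusion transfers.

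The main obstacle will be guaranteeing rough isometry rather than the weaker rough similarity appearing in Theorem~\ref{thm:3}: the hypothesis that $e^{-(\cdot|\cdot)_o}$ is bilipschitz to an honest metric amounts to the visual parameter being exactly $1$, matching the coefficient $\tfrac{1}{2}$ in the exponent of~(\ref{eq:mcone}) so that no rescaling of the cone metric is required. Absent this hypothesis one would be forced to snowflake by some $\ep<1$, thereby introducing a nontrivial factor in the exponent and producing only a rough similarity. Combining the two steps above delivers the conclusion.
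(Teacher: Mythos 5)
Your overall strategy is the paper's: build the Bonk--Schramm cone over $(\d_\infty X,d)$, invoke Proposition~\ref{pr:mcone} to get the asymptotic $\PT_{-1}$ property, and quote \cite[Theorem~8.2]{BoS} for the rough isometry. Your remarks about the bilipschitz flexibility and about the visual parameter being $1$ are correct and harmless. But there is one genuine error: you take $Y=\Con(Z,d)=Z\times(0,\infty)$, the \emph{full} cone, and claim $X$ is rough isometric to it. That is false in general. Since $Z=\d_\infty X$ is bounded (of diameter $D$), the slab $Z\times[D,\infty)$ collapses in the $Z$-directions: for $h,h'\ge D\ge d(z,z')$ one has $\bigl|\log(h/h')\bigr|\le \rho((z,h),(z',h'))\le \bigl|\log(h/h')\bigr|+2\log 2$, so this slab lies within bounded Hausdorff distance of a single geodesic ray $\{z_0\}\times[D,\infty)$. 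The full cone therefore has an extra end, and its boundary at infinity is $Z\sqcup\{\om\}$ rather than $Z$ (this is exactly the identification carried out in Section~\ref{sec:hypcone}). A rough isometry of hyperbolic spaces induces a homeomorphism of boundaries, so $X$ cannot be rough isometric to the full cone unless $\d_\infty X$ happens to be homeomorphic to itself with an isolated point adjoined; for $X=\Hy^2$, say, $S^1$ versus $S^1\sqcup\{\mathrm{pt}\}$ already rules this out. What \cite[Theorem~8.2]{BoS} actually provides, and what the paper uses, is a rough isometry to the \emph{truncated} cone $\Con^T(\d_\infty X,d)=\d_\infty X\times(0,D]$.

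The repair is immediate and you essentially have all the ingredients: replace $Y$ by the truncated cone. The asymptotic $\PT_{-1}$ inequality is a universal four-point condition, so it is inherited by any subset of the full cone; equivalently, the computation in the proof of Proposition~\ref{pr:mcone} (which, as you correctly observe, uses only the $\PT_0$ inequality on the base and identity (\ref{eq:ib})) runs verbatim on $Z\times(0,D]$. With that substitution your argument coincides with the paper's proof.
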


\begin{proof}
 Consider the truncated Cone
$\Con^T(\d_{\infty}X,d)$, which is defined as
$\Con^T(\d_{\infty}X) =\d_{\infty}X\times(0,D]$,
where $D=\diam(\d_{\infty}X,d)$ again with the metric defined by
(\ref{eq:mcone}). This is the cone considered in \cite{BoS}, where it is shown that
$X$ is rough isometric to $\Con^T(\d_{\infty}X,d)$. Since by Proposition \ref{pr:mcone}
the cone is $\PT_{-1}$, the result follows.
\end{proof}

Now we can finish the proof of Theorem \ref{thm:3}.
We start with some visual Gromov hyperbolic space
$(X,d_X)$ with some base point $o\in X$. 
There exists some $\varepsilon >0$, such that the function
$e^{-\varepsilon (\cdot|\cdot )_o}$ 
is bilipschitz to a metric $\rho(\cdot,\cdot)$ on $\d_{\infty}X$
(see e.g. \cite[Theorem 2.2.7]{BS}).
By a result of Lytchak (see \cite[Proposition 8]{FS1})
$\rho^{\frac{1}{2}}$  is a ptolemaic. 
Clearly $e^{-\frac{\varepsilon}{2} (\cdot|\cdot )_o}$ is bilipschitz to
the metric
$\rho^{\frac{1}{2}}(\cdot,\cdot)$.
Thus the visual Gromov hyperbolic space
$(X,\frac{\varepsilon}{2}d_X)$ satisfies the assumptions of Proposition \ref{pr:bl}
and is rough isometric to an asymptotically $\PT_{-1}$ space.
Hence $(X,d_X)$ is rough similar to this space.


\bigskip

\begin{tabbing}

Renlong Miao,\hskip10em\relax \= Viktor Schroeder,\\ 

Institut f\"ur Mathematik,\>
Institut f\"ur Mathematik, \\

Universit\"at Z\"urich,\> Universit\"at Z\"urich,\\
Winterthurer Strasse 190, \>
 Winterthurer Strasse 190, \\

CH-8057 Z\"urich, Switzerland\>  CH-8057 Z\"urich, Switzerland\\

{\tt mrenlong1988@gmail.com}\> {\tt vschroed@math.uzh.ch}\\

\end{tabbing}

\end{document}